\title[]{On Conditions for Asymptotic Stability of Dissipative
Infinite-Dimensional Systems with Intermittent Damping}
\author{Falk M. Hante, Mario Sigalotti and Marius Tucsnak}
\thanks{F. M. Hante is with
Mathematics Center of Heidelberg (MATCH),
 Interdisciplinary Center for Scientific Computing (IWR),
Im Neuenheimer Feld 368,
69120 Heidelberg, Germany.
E-Mail: \url{falk.hante@iwr.uni-heidelberg.de}. \\
\indent M. Sigalotti is with INRIA Saclay--\^Ile-de-France, Team GECO,  and CMAP, UMR 7641, \'Ecole Polytechnique, Route de Saclay,
  91128 Palaiseau Cedex, France. E-Mail:  \url{mario.sigalotti@inria.fr}.\\
\indent  M. Tucsnak is with Institut
\'Elie Cartan (IECN), UMR 7502,
 BP 239, Vand\oe uvre-l\`es-Nancy 54506,  France
 and CORIDA, INRIA Nancy--Grand Est.
E-Mail:  \url{tucsnak@iecn.u-nancy.fr}.
}
\renewcommand{\:}{\mathcal{\colon}}
\newcommand{\laplace}{\Delta}
\newcommand{\NN}{\mathbb{N}}
\newcommand{\ZZ}{\mathbb{Z}}
\newcommand{\RR}{\mathbb{R}}
\newcommand{\CC}{\mathbb{C}}
\renewcommand{\H}{\mathcal{H}}
\newcommand{\la}{\langle}
\newcommand{\ra}{\rangle}
\newcommand{\weakto}{\rightharpoonup}
\newcommand{\weakstarto}{\overset{*}{\rightharpoonup}}
\newcommand{\Lcal}{\mathcal{L}}
\newcommand{\K}{\mathcal{K}}
\theoremstyle{plain}
\newtheorem{theorem}{Theorem}[section]
\newtheorem{proposition}[theorem]{Proposition}
\newtheorem{corollary}[theorem]{Corollary}
\newtheorem{lemma}[theorem]{Lemma}
\newtheorem{hypothesis}[theorem]{Hypothesis}
\newtheorem{definition}[theorem]{Definition}
\theoremstyle{remark}
\newtheorem{remark}{Remark}[section]
\newtheorem{example}{Example}[section]
\begin{document}

\begin{abstract}
We study the asymptotic stability of a dissipative evolution in a Hilbert space subject to intermittent
damping. We observe that, even if the intermittence satisfies a persistent excitation condition,
if the Hilbert space is infinite-dimensional then the system needs not being asymptotically stable (not even in the weak sense).
Exponential stability is recovered under a generalized observability inequality, allowing for time-domains that are not intervals.
Weak asymptotic stability is obtained under a similarly generalized unique continuation principle.
Finally, strong asymptotic stability is proved for intermittences that
do not necessarily satisfy some persistent excitation condition, evaluating
their total contribution to the decay of the trajectories of the damped system.
Our results are discussed using the example of the wave equation,
Schr\"odinger's equation and,
for strong stability, also
the special case of finite-dimensional systems.
\end{abstract}

{
\maketitle
}

\noindent{\bf Keywords:}  Intermittent damping; asymptotic behavior; persistent excitation; maximal dissipative operator.

\section{Introduction}

Consider a system of the form
$\dot z = Az + Bu$ with $z$ in some (finite- or infinite-dimensional) Hilbert space $H$, $B$ bounded,
and assume that there exists a stabilizing feedback law $u=u^*=K z$. Now consider
 the system
\begin{equation}
\label{1000} \dot z = Az + \alpha(t)Bu,
\end{equation}
where the {\em signal}
$\alpha$ takes values in $[0,1]$ and
$\alpha(t)=0$ for certain times $t$
(i.e., the control may be switched off over possibly non-negligible subsets of time).
Under which conditions
imposed on $\alpha$ is the closed-loop system  \eqref{1000} with
the same control $u^*$ asymptotically stable?
It must be stressed that a complete knowledge of
$\alpha$ (and, in particular, the precise information on the set of times
where it vanishes) would be a too restrictive condition to
impose on $\alpha$. We rather look for conditions valid for a
whole class $\mathcal {G}$ of functions $\alpha$ and, therefore, we
expect the closed-loop systems \eqref{1000} with $u^*$ to be
asymptotically stable for every $\alpha\in {\mathcal {G}}$ (and, possibly, uniformly with respect to all such $\alpha$).
If $\alpha$ takes
the values $0$ and $1$ only, then the system \eqref{1000} actually switches
between the uncontrolled system $\dot z=Az$ and the controlled one $\dot z =
Az+Bu$.

{
If the uncontrolled dynamics are unstable then we should impose on $\alpha$ conditions
guaranteeing a {\em sufficient amount of action} on the system.
Actually, even if they are asymptotically stable, the stability of the overall system is
not guaranteed in general (see \cite{HanteSigalotti2011}).}

The question issued above
may be motivated by some failure
in the transmission from the controller to the plant, leading to instants of time at which the control is switched off, or to some
time-varying phenomenon affecting the efficiency of the control action.
It is also
related to problems
stemming from identification and adaptive control (see, e.g.,
\cite{ABJKKMPR}). In such type of problems, one is lead to consider the stability of  linear systems of the kind $\dot
z = -P(t)z$, $z\in\RR^N$, where the matrix $P(\cdot)$ is
symmetric non-negative definite.
Under which conditions on $P$
is the non-autonomous system stable? An answer for this
particular case can be found in the seminal paper \cite{MORNAR}
which asserts that, if $P\geq0$ is bounded and has
bounded derivative, it is {\em necessary and sufficient}, for the
global {\em exponential stability} of $\dot z=-P(t)z$, that $P$ is also
{\em persistently exciting}, 
 i.e., that there exist
$\mu,T>0$ such that
$$
\int_{t}^{t+T} \xi^T P(s)\xi\,ds \geq \mu,
$$
for all unitary vectors $\xi\in\RR^N$ and all $t\geq 0$.

The notion of persistent excitation, therefore,
appears naturally as  a reasonable additional
assumption on $\alpha$ while studying the stabilization of \eqref{1000}.
The papers  \cite{ChailletChitourLoriaSigalotti2008,ChitourSigalotti2010}, whose results are detailed below, study
the case of finite-dimensional
 systems of the form \eqref{1000} under the assumption
 that
there exist
two positive constants $\mu,T$ such that, for every $t\geq 0$,
\begin{equation}
\label{Tmu}
\int_t^{t+T}\alpha(s)\,ds\geq \mu.
\end{equation}
Given two positive real numbers $\mu\leq T$, we
say that $\alpha$ is a $T$-$\mu$ PE-signal (standing for {\em persistently exciting signal}) if it satisfies \eqref{Tmu}.
Note
that we do not consider here any extra assumption on the regularity
of the PE-signal $\alpha$ (e.g., having a bounded derivative or being piecewise constant).

In \cite{ChailletChitourLoriaSigalotti2008}
it is proved that if $A$ is neutrally stable
(and $(A,B)$ is stabilizable)
then $u^*=-B^T x$ stabilizes \eqref{1000} exponentially, uniformly with respect to
the class of $T$-$\mu$ PE-signals
(see also \cite{ABJKKMPR}).
The results in
\cite{ChailletChitourLoriaSigalotti2008} cover also the first nontrivial case where $A$ is
not stable,
namely the double integrator $\dot z=J_2z+\alpha b_0u$, where $J_2$ denotes the
$2\times2$ Jordan block corresponding to the eigenvalue zero, the control is scalar and $b_0=(0,1)^T$. It is shown that, for every
pair $(T,\mu)$, there exists a
 feedback $u^*=Kx$
such that the corresponding closed-loop system is exponentially stable, uniformly with respect to
the class of $T$-$\mu$ PE-signals.

In
\cite{ChitourSigalotti2010} this last result is extended by proving that
for the single-input case
$$
\dot z = A z +\alpha(t)b u,\qquad u\in\RR,\qquad z\in\RR^N,
$$
 there exists a stabilizer
uniform feedback $u^*=Kx$
for
the class of $T$-$\mu$ PE-signals whenever $(A,b)$ is controllable and the eigenvalues of $A$
have non-positive real part. It is shown, moreover, that there exist controllable pairs $(A,b)$  for which no such stabilizing feedback
exists.

\medskip

The scope of the present paper is to
extend the analysis described above to infinite-dimensional systems.
We focus on the case where $A$ generates
a strongly continuous contraction semigroup and $K=-B^*$, where $B^*$ denotes the adjoint of $B$. This situation  corresponds to the neutrally stable case studied, in the finite-dimensional setting,  in \cite{ChailletChitourLoriaSigalotti2008}.
Recall that the linear feedback control term $Bu=-BB^*z$ is a common choice to stabilize a dissipative linear system
(see \cite{Slemrod1974} and also \cite{Haraux1989}).

The motivating example, illustrating the new
phenomena associated with the new
setting, is the one of a string, fixed at both ends, and damped---when $\alpha(t)=1$---on a proper subdomain. It is not hard to construct (see Example~\ref{exp-wave} for details) an example of periodic traveling wave on which the damping induced by a certain periodic nonzero signal $\alpha$ (hence, satisfying a persistent excitation condition) is ineffective.
Therefore, the counterpart of the finite-dimensional stabilizability result does not hold and additional assumptions have to be made in order to guarantee the stability of the closed-loop system.

The first type of results in this direction (Section~\ref{s:exponential}) concerns exponential stability. We prove that, if there exist $\vartheta,c>0$ such that
\begin{equation}\label{ineqneeded-intro}
 \int_0^\vartheta\alpha(t)\|B^* e^{tA}z_0\|_H^2\,dt \geq c\|z_0\|_H^2,\quad\text{for all}~\mbox{$T$-$\mu$ PE-signal}~\alpha(\cdot),
\end{equation}
then there exist $M \geq 1$ and $\gamma>0$ such that
the solution $z(t)$ of
\begin{equation}
\label{1010} \dot z = Az - \alpha(t)BB^*z,
\end{equation}
satisfies
$$
 \|z(t)\|_H \leq M e^{-\gamma t}\|z_0\|_H
$$
uniformly with respect to $z_0$ and $\alpha$.
 (See Theorem~\ref{thm2}.)
The counterpart of \eqref{ineqneeded-intro} in the unswitched case (i.e., when $\alpha\equiv 1$) is an  observability inequality for the pair $(A,B^*)$. Condition \eqref{ineqneeded-intro}  can actually be seen as a generalized observability inequality.
The proof of Theorem~\ref{thm2} is based on deducing from \eqref{ineqneeded-intro} a uniform decay for the solutions of \eqref{1010} of the squared norm, chosen as Lyapunov function, on time-intervals of length $T$. The conclusion follows from standard considerations on the scalar-valued Lyapunov function (see, for instance, \cite{AeyelsPeuteman1998}).
As an application of the general stability result we consider the example of the
 wave equation  on a  $N$-dimensional domain, damped everywhere.
It should be stressed that generalized observability inequalities of the type discussed here have already been considered in the literature for the heat equation with boundary or locally distributed control (\cite{Fattorini2005,MizelSeidman1997,Phung,Wang2008}).

The second type of results presented in this paper (Section~\ref{s:weak}) deals with weak stability.
We prove that,
if there exists $\vartheta>0$ such that
$$
 \int_0^\vartheta \alpha(s) \|B^* e^{sA}z_0\|^2_H\,ds \ne 0 \qquad \mbox{ for all }z_0\ne 0\mbox{ and all $T$-$\mu$ signal }\alpha,
$$
then the solution $t\mapsto z(t)$ of system \eqref{1010} converges weakly to $0$ in $H$ as $t\to\infty$ for any initial data $z_0 \in H$ and any $T$-$\mu$ PE-signal $\alpha$.
 (See Theorem~\ref{thm:weak}.) The counterpart of such condition in the unswitched case is the unique continuation property, ensuring approximate controllability (see, e.g., \cite{TucsnakWeiss2009}).
 The proof of Theorem~\ref{thm:weak} is based on a compactness argument.
The theorem is applied to the case of a  Schr\"odinger equation with internal control localized on a subdomain. The generalized unique continuation property is then recovered by an analyticity argument (Privalov's theorem) and standard unique continuation (Holmgren's theorem).

Finally, a third type of results (Section~\ref{s:strong}) concerns strong (but not necessarily exponential) stability.
In the spirit of \cite{HarauxMartinezVancostenoble2005}, instead of imposing conditions on $\alpha$ which are satisfied on every time-window of prescribed length, we admit the ``excitations" to be rarefied in time and of variable duration. Stability is guaranteed by asking that the total contribution of the excitations, suitably summed up, is ``large enough".
More precisely, it is proved
that if  there exist $\rho>0$ and a continuous function $c\: (0,\infty) \to (0,\infty)$
such that for all $T>0$,
\eqref{ineqneeded-intro} holds true with $\mu=\rho T$ and $c=c(T)$,
and if there exists a sequence of disjoint intervals  $(a_n,b_n)$
in $[0,\infty)$ with
$\int_{a_n}^{b_n} \alpha(t)\,dt \geq \rho (b_n-a_n)$ and $\sum_{n=1}^\infty c(b_n-a_n)=\infty$, then the solution $z(\cdot)$ of \eqref{1010} satisfies $\|z(t)\|_H \to 0$ as $t \to \infty$.
A function $c(\cdot)$ as above is explicitely found in the case of the
uniformly damped wave equation (it is of order $T^3$ for $T$ small)
and also in the finite-dimensional case, where it is of the same order as the one
computed by Seidman in the unswitched case (\cite{Seidman1988}).
A large literature is devoted to conditions ensuring stability of second order systems with time-varying parameters, mostly but not exclusively in the finite-dimensional setting. Let us mention, for instance,   \cite{Hatvani1996,HatvaniKrisztinTotik1995,PucciSerrin1996,Smith1961} and the already cited paper \cite{HarauxMartinezVancostenoble2005}. Integral conditions in space, instead of in time, guaranteeing stabilizability of systems whose uncontrolled dynamics are given by a contraction semigroup have also been studied. Let us mention, for instance, \cite{Martinez1999,Nakao1996,Tcheugoue1998} for the wave equation and \cite{GuzmanTucsnak2003} for the plate equation. In analogy with the function $c(\cdot)$ introduced above, in the mentioned papers the correct weight has to be used in order to sum up the contributions of the damping coefficients at different points. An interesting question, with possible applications to bang-bang control, would be to combine these two type of results, i.e., to consider controls supported in ``sufficiently large'' measurable subsets of the time-space domain.

\section{Preliminaries}
Let $H$ be a Hilbert space with scalar product $\la\cdot,\cdot\ra_H$. In the following, we study systems modeled as
\begin{equation}\label{sys}
\left\{\begin{aligned}
&\dot{z}(t) = A z(t) + \alpha(t) B u(t)\\
&u(t) = -B^*z(t)\\
&z(0) = z_0
\end{aligned}\right.
\end{equation}
with $A\: H \supset D(A) \to H$ being a (possibly unbounded) linear operator generating a strongly continuous contraction semigroup $\{e^{tA}\}_{t \geq 0}$, $B\:U\to H$ being a bounded linear operator on some Hilbert space $U$, $B^*\: H\to U$ being its adjoint and
$\alpha\: [0,\infty) \to [0,1]$ being some 
signal possibly tuning the feedback control term $Bu(t) = -BB^*z(t)$. 
 Note that under the above assumptions, the operator $A$ is maximal dissipative (see \cite[Proposition~3.1.13]{TucsnakWeiss2009}).

We are interested in conditions on $A$, $B$
and on a class of signals $\mathcal{G}$
ensuring the asymptotic decay of solutions $z\mapsto z(t)$ of \eqref{sys} to the origin in a suitable
sense as time $t$ tends to infinity---independently of the initial data $z_0 \in H$ and of the specific $\alpha(\cdot)$ chosen in $\mathcal{G}$.
The interesting case is when the uncontrolled
evolution does not generate a strict contraction, i.\,e., when $\|e^{tA}\|=1$ for $t \geq 0$, so that the energy of the system may stay constant in the absence of damping.

The classes of signals we mostly deal with are
those defined by persistent excitation conditions.
The latter are defined as follows:
given two positive constants $T$ and $\mu$ satisfying $\mu \leq T$,
we say that a measurable signal $\alpha(\cdot)$ is a \emph{$T$-$\mu$ PE-signal} if it satisfies
\begin{equation}\label{PEcondition}
 \int_t^{t+T} \alpha(s)\,ds \geq \mu,\quad~\text{for all}~t \in [0,\infty).
\end{equation}

We note at this point that we will not make any further smoothness assumption on $\alpha(\cdot)$ for our stability results in this paper.
Thus our analysis takes into account the modeling of abrupt actuator failures up to the extremal case when the system switches between an
uncontrolled evolution when $\alpha(t)=0$ and a fully controlled evolution when $\alpha(t)=1$. Conditions of the type \eqref{PEcondition} however mean
that to some extent the feedback control is active.  

Solutions of \eqref{sys} have to be interpreted in the  mild sense, i.e., for any $t \geq 0$ and $z_0 \in H$, the solution $z(\cdot)$ of \eqref{sys}, evaluated at time $t$, is  given by
$$
 z(t)=e^{tA}z_0-\int_0^t e^{(t-s)A} \alpha(s) BB^*z(s)\,ds.
$$
For any measurable signal $\alpha$ and for any finite time-horizon
$\vartheta \geq 0$, there exists a unique mild solution $z(\cdot)\in C([0,\vartheta];H)$ (see,
e.\,g., \cite{BallMarsdenSlemrod1982}). 
Occasionally, we write $z(t;z_0)$ to indicate
the dependency of the mild solution on the initial data $z_0$.

As recalled in the introduction,  it is shown in \cite{ChailletChitourLoriaSigalotti2008} that for $H=\RR^N$ and
$(A,B)$ controllable (with $A$ a dissipative $N \times N$-matrix)
the solutions of \eqref{sys} satisfy
$$
 \|z(t)\|\leq M e^{-\gamma t}\|z_0\|,\quad t\geq 0
$$
uniformly in $\alpha$ satisfying \eqref{PEcondition},
in the sense that the constants $M$ and $\gamma$ depend only
on $A,B,\mu$ and $T$.
Such result
does not extend in full generality to infinite-dimensional spaces. We see this from the following example with $(A,B)$ being a controllable pair, made of a skew-adjoint (and thus dissipative) operator $A$ and a bounded operator $B$.

\begin{example} (String equation)\label{exp-wave}
Let us consider 
a damped string of length 
normalized to one
with fixed endpoints.
Its dynamics can be described by
\begin{align}
\label{wellengleichung} v_{tt}(t,x)&= v_{xx}(t,x)-\alpha(t)d(x)^2 v_t(t,x),\quad &(t,x)\in (0,\infty) \times (0,1),\\
\label{initial} v(0,x)&= y_0(x),\quad &x\in (0,1),\\
\label{initial2}  v_t(0,x)&= y_1(x),\quad &x\in (0,1),\\
\label{RB} v(t,0) &=v(t,1)= 0,\quad &t\in (0,\infty),
\end{align}
where $d\in L^\infty(0,1)$ and $\alpha\in L^\infty([0,\infty),[0,1])$.

We can express such dynamics as a system of type \eqref{sys} with
$H=U=H^1_0(0,1)\times L^2(0,1)$, $z(t)=(v(t,\cdot),v_t(t,\cdot))$, $A(z_1(t),z_2(t))=(z_2(t),\partial_{xx} z_1(t))$, $B(z_1(t),z_2(t))=(0,d  z_2(t))$.  The operator $A$ is dissipative taking, as norm in $H$,
$$\|(z_1,z_2)\|^2=\|\partial_x z_1\|^2_{L^2(0,1)}+\| z_2\|^2_{L^2(0,1)}.$$

Assume that
\begin{equation}\label{example-nonstability}
d=\chi_\omega
\end{equation}
for some proper subinterval $\omega$ of $(0,1)$.
Then there exist $T\geq\mu>0$, a $T$-$\mu$ PE-signal $\alpha$, and a corresponding nonzero periodic solution. This follows from the results in \cite{MartinezVancostenoble2002} (see also \cite{HarauxMartinezVancostenoble2005})
and can be illustrated by an explicit counterexample expressed in terms of d'Alembert solutions.

Let $\omega=(a,b)$ and assume, without loss of generality, that $b<1$. Set $b'=\frac{1+b}2$.
Take $T=2$ and $\mu=1-b'$.
Then
$$\alpha=\sum_{k=0}^\infty \chi_{[2k-\mu,2k+\mu)}$$
 is a $T$-$\mu$ signal  and
$$v(t,x)=\sum_{k=0}^\infty (\chi_{[b'+2k,1+2k]}(x+t)- \chi_{[-1-2k,-b'-2k]}(x-t))$$
is a periodic, nonzero, mild solution of \eqref{wellengleichung}, \eqref{RB} corresponding to $\alpha$.

Notice, in particular, that even {\em weak} asymptotic stability fails to hold
in this case. \hfill$\diamond$
\end{example}

The scope of the reminder of the paper is
 to understand to which extent the finite-dimensional results obtained
in \cite{ChailletChitourLoriaSigalotti2008}
may be extended to the case
where $H$ is infinite-dimensional.
A crucial remark in this perspective is the following energy decay estimate.
Let
\begin{equation}\label{eq:Vdef}
V(z)=\frac12\|z\|_H^2
\end{equation}
denote the ``energy'' in $H$ and observe that we have
\begin{equation}\label{Vdecay}
 V(z(t+s)) - V(z(t)) \leq - \int_t^{t+s} \alpha(s) \|B^* z(s)\|_U^2\,ds\quad\text{for all}~s \geq 0,
\end{equation}
so that $V(\cdot)$ is non-increasing along trajectories for all signals $\alpha(\cdot)$. This can be shown by a standard approximation argument \cite[Theorem~2.7]{Pazy1983} and
using that $A$ is maximal dissipative.

The estimate provided by the following lemma will be a key tool in the proof of some of the results in this paper.
\begin{lemma}\label{lem:lem0} Let $0 \leq a \leq b < \infty$. Then, for any measurable function $\alpha\: [0,\infty) \to [0,1]$,
the solution $z(\cdot)$ of system \eqref{sys} satisfies
$$
 V(z(b))-V(z(a))  \leq -(2 + 2(b-a)^2\|B\|^4)^{-1} \int_0^{b-a} \alpha(t+a)\|B^* e^{tA}z(a)\|_U^2\,dt.
$$
\end{lemma}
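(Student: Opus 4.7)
The natural starting point is to apply the variation-of-constants formula at time $a$: setting $\tau = b-a$ and using that $\{e^{tA}\}_{t\geq 0}$ is a semigroup, the mild solution of \eqref{sys} satisfies
$$
 z(t+a) = e^{tA} z(a) - \int_0^t e^{(t-s)A} \alpha(s+a) B B^* z(s+a)\, ds, \qquad t\in[0,\tau].
$$
Applying $B^*$, rearranging, and taking the $U$-norm, I would use the contractivity of the semigroup, the fact $\|B^*e^{(t-s)A}B\|_{L(U)} \leq \|B\|^2$, and $\alpha \leq 1$ to obtain the pointwise estimate
$$
 \|B^* e^{tA} z(a)\|_U \leq \|B^* z(t+a)\|_U + \|B\|^2 \int_0^t \alpha(s+a) \|B^* z(s+a)\|_U\, ds.
$$

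Next I would square this inequality using $(x+y)^2 \leq 2x^2 + 2y^2$, and control the integral term by Cauchy--Schwarz applied with the weight $\alpha(s+a) = \sqrt{\alpha(s+a)}\cdot\sqrt{\alpha(s+a)}$ (exploiting $\alpha \leq 1$, so $\int_0^t \alpha(s+a)\, ds \leq t$). This yields
$$
 \|B^* e^{tA} z(a)\|_U^2 \leq 2\|B^* z(t+a)\|_U^2 + 2\|B\|^4 \, t \int_0^t \alpha(s+a)\|B^* z(s+a)\|_U^2\, ds.
$$

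Multiplying by $\alpha(t+a)\leq 1$ and integrating $t$ over $[0,\tau]$, the double integral is handled by Fubini's theorem: the resulting inner integral $\int_s^\tau \alpha(t+a) t\, dt$ is bounded by $\tau \int_s^\tau \alpha(t+a)\, dt \leq \tau^2$, producing the clean bound
$$
 \int_0^\tau \alpha(t+a)\|B^* e^{tA} z(a)\|_U^2\, dt \leq (2 + 2\tau^2 \|B\|^4) \int_0^\tau \alpha(s+a) \|B^* z(s+a)\|_U^2\, ds.
$$
Combining this with the energy identity \eqref{Vdecay}, which after the change of variable $s \mapsto s+a$ reads $V(z(b)) - V(z(a)) \leq -\int_0^\tau \alpha(s+a)\|B^*z(s+a)\|_U^2\, ds$, gives the announced inequality.

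The whole argument is essentially a Gronwall-type comparison between the actual damped observation $B^* z(t+a)$ and the ``ideal'' undamped one $B^* e^{tA} z(a)$. I expect the only delicate point to be bookkeeping in the double integral so that the constant $2+2(b-a)^2\|B\|^4$ comes out with the exact factor stated; the choice to bound $\int_s^\tau \alpha(t+a) t\, dt$ via $\tau\cdot\tau$ (rather than the tighter $(\tau^2-s^2)/2$) is what delivers the $2(b-a)^2\|B\|^4$ term in the denominator.
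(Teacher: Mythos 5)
Your argument is correct and is essentially the paper's own proof: both decompose $B^*e^{tA}z(a)$ as the actual observation $B^*z(t+a)$ plus a correction term, apply $(x+y)^2\le 2x^2+2y^2$, bound the correction via Cauchy--Schwarz and the contractivity of $e^{tA}$, and conclude with the energy inequality \eqref{Vdecay}. The only cosmetic difference is that the paper bounds the correction $\psi_a$ by a sup-norm estimate on $[a,b]$ while you use Fubini on the double integral; the constant $2+2(b-a)^2\|B\|^4$ comes out identically either way.
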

\begin{proof}
Let
\begin{equation*}
\phi_a(t)=e^{(t-a)A}z(a),\quad t\geq a,
\end{equation*}
and $\psi_a(\cdot)$ be the mild solution of
\begin{equation*}\label{syspsithm3}
\left\{\begin{aligned}
 	&\dot{\psi}_a(t) = A\psi_a(t)-\alpha(t) BB^*z(t),\quad t \geq a,\\
	&\psi_a(a) = 0.
\end{aligned}\right.
\end{equation*}
Observe that
\begin{equation}\label{eq:sumphipsilem0}
 \phi_a(t)+\psi_a(t)=e^{(t-a)A}z(a)-\int_{a}^t
 e^{(t-\tau)A}\alpha(\tau)BB^*z(\tau)\,d\tau = z(t),\quad t \geq a,
\end{equation}
and that
\begin{equation}\label{eq:psiestlem0}
 \sup_{\xi \in [a,b]}\|\psi_a(\xi)\|_H^2 \leq (b-a) \|B\|^2
 \int_{a}^{b} \|\alpha(t) B^*z(t)\|_U^2\,dt,
\end{equation}
because, for $\xi \in [a,b]$,
\begin{align*}
\|\psi_a(\xi)\|_H^2 &\leq \left(\int_{a}^{\xi} \|e^{(\xi-t)A}\|_H \|B\|
\|\alpha(t) B^*z(t)\|_U\,dt\right)^2\\ &\leq \|B\|^2 \left(\int_{a}^\xi
\|\alpha(t) B^*z(t)\|_U\,dt \right)^2\\ &\leq (\xi-a)\|B\|^2 \int_{a}^{\xi}
\|\alpha(t) B^*z(t)\|_U^2\,dt,
\end{align*}
where we used that $\|e^{tA}\| \leq 1$.

From inequality \eqref{eq:psiestlem0} we get
\begin{equation}\label{eq:psiest2lem0}
\begin{aligned}
 \int_{a}^{b} &\alpha(t)\|B^*\psi_a(t)\|_U^2\,dt \leq (b-a)
 \|B^*\|^2 \sup_{t \in [a,b]} \|\psi_a(t)\|_H^2\\ & \leq (b-a)^2 \|B\|^2
 \|B^*\|^2 \int_{a}^{b} \|\alpha(t)B^*z(t)\|_U^2\,dt.
\end{aligned}
\end{equation}

Moreover, using \eqref{eq:sumphipsilem0}, we obtain
\begin{equation}\label{eq:phiest1lem0}
\begin{aligned}
 \int_{a}^{b} &\alpha(t) \|B^* \phi_a(t)\|_U^2\,dt = \int_{a}^{b}
 \alpha(t)\|B^*(z(t)-\psi_a(t))\|_U^2\,dt\\ &\leq 2\left( \int_{a}^{b}
 \alpha(t)\|B^*z(t)\|_U^2\,dt + \int_{a}^{b}
 \alpha(t)\|B^*\psi_a(t)\|_U^2\,dt\right).
\end{aligned}
\end{equation}

Plugging \eqref{eq:psiest2lem0} in \eqref{eq:phiest1lem0}, we get
\begin{equation}\label{eq:phiestlem0}
 \int_{a}^{b} \alpha(t) \|B^* \phi_a(t)\|_U^2\,dt \leq 2(1+(b-a)^2
 \|B\|^4)\int_{a}^{b}\|\alpha(t) B^* z(t)\|_U^2\,dt.
\end{equation}

We get from the energy inequality \eqref{Vdecay} combined with
\eqref{eq:phiestlem0} that
\begin{align*}
V(z(b))-V(z(a)) &\leq -\int_{a}^{b}\alpha(t)\|B^*z(t)\|_U^2\,dt\\
&\leq - \frac{1}{2(1+(b-a)^2
 \|B\|^4)}\int_{a}^{b}\alpha(t)\|B^*\phi_a(t)\|_U^2\,dt\\
&=-\frac{1}{2(1+(b-a)^2
 \|B\|^4)}\int_0^{b-a}\alpha(a+t)\|B^* e^{tA}z(a)\|_U^2\,dt,
\end{align*}
concluding the proof.
\end{proof}



Other useful facts which are used repeatedly below  are the following remarks on the class of $T$-$\mu$ PE-signals.
We note that if $\alpha(\cdot)$ is a $T$-$\mu$ PE-signal, then for every $t_0 \geq 0$, the same is true for $\alpha(t_0+\cdot)$. Moreover,
the set of all $T$-$\mu$ PE-signals is weakly-$*$
compact, i.\,e., for any sequence $(\alpha_n(\cdot))_{n\in\NN}$ in this set, there exists a subsequence $(\alpha_{n(\nu)}(\cdot))_{\nu\in\NN}$ such that for some $T$-$\mu$ PE-signal $\alpha_\infty(\cdot)$
\begin{equation}\label{eq:PEcompactness}
 \int_0^\infty \alpha_\infty(s)g(s)\,ds = \lim_{\nu\to\infty} \int_0^\infty \alpha_{n(\nu)}(s) g(s)\,ds\quad\text{for all}~g\in L^1([0,\infty)).
\end{equation}
The existence of a function $\alpha_\infty\in L^\infty([0,\infty),[0,1])$ satisfying \eqref{eq:PEcompactness} follows from the weak-$*$ compactness of $L^\infty([0,\infty),[0,1])$ and one recovers \eqref{PEcondition} for $\alpha_\infty$ by choosing as $g$ in \eqref{eq:PEcompactness} the indicator function of the interval $[t,t+T]$.

\section{Exponential stability under persistent excitation}\label{s:exponential}

We next show that, under the following condition, asymptotic exponential stability holds.

\begin{hypothesis}\label{hyp:thm2}
There exist two constants $c,\vartheta>0$  such that
\begin{equation}\label{ineqneeded}
 \int_0^\vartheta\alpha(t)\|B^* e^{tA}z_0\|_U^2\,dt \geq c\|z_0\|_H^2,\quad\text{for all}~z_0\in H~\text{and all}~T\text{-}\mu~\text{PE-signals}~\alpha(\cdot).
\end{equation}
\end{hypothesis}


\begin{theorem}\label{thm2}
Under Hypothesis~\ref{hyp:thm2}, there exist two constants $M \geq 1$ and $\gamma>0$ such that the mild solution $z(\cdot)$ of system \eqref{sys} satisfies
\begin{equation}\label{eq:thm2est}
 \|z(t)\|_H \leq M e^{-\gamma t}\|z_0\|_H,\quad t\geq 0,
\end{equation}
for any initial data $z_0 \in H$ and any $T$-$\mu$ PE-signal $\alpha(\cdot)$.
\end{theorem}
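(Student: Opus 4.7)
The plan is to use Lemma~\ref{lem:lem0} to convert the generalized observability inequality of Hypothesis~\ref{hyp:thm2} into a uniform contraction of the Lyapunov function $V(z)=\tfrac12\|z\|_H^2$ over each time window of length $\vartheta$, and then iterate.

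First I would fix $a\geq 0$ and apply Lemma~\ref{lem:lem0} on the interval $[a,a+\vartheta]$. This yields
\[
V(z(a+\vartheta))-V(z(a))\leq -\frac{1}{2(1+\vartheta^2\|B\|^4)}\int_0^\vartheta \alpha(a+t)\,\|B^* e^{tA}z(a)\|_U^2\,dt.
\]
The crucial observation is that if $\alpha$ is a $T$-$\mu$ PE-signal, then so is $\alpha(a+\cdot)$, so Hypothesis~\ref{hyp:thm2} applies with $z_0$ replaced by $z(a)$ and $\alpha(\cdot)$ replaced by $\alpha(a+\cdot)$. This gives a lower bound by $c\,\|z(a)\|_H^2=2c\,V(z(a))$, and hence
\[
V(z(a+\vartheta))\leq (1-\kappa)\,V(z(a)),\qquad \kappa:=\frac{c}{1+\vartheta^2\|B\|^4}.
\]
Note that since $V$ is non-negative we automatically have $\kappa\in(0,1]$; the case $\kappa=1$ would only give faster decay, so we may assume $\kappa<1$ without loss of generality.

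Iterating this one-step contraction along the arithmetic progression $a=0,\vartheta,2\vartheta,\ldots$ yields $V(z(n\vartheta))\leq(1-\kappa)^n V(z_0)$ for every integer $n\geq 0$. For an arbitrary $t\geq 0$, writing $t=n\vartheta+r$ with $r\in[0,\vartheta)$ and using the monotonicity of $V$ along trajectories (inequality \eqref{Vdecay}), I obtain
\[
V(z(t))\leq V(z(n\vartheta))\leq (1-\kappa)^n V(z_0)\leq \frac{1}{1-\kappa}\,(1-\kappa)^{t/\vartheta}V(z_0).
\]
Taking square roots gives \eqref{eq:thm2est} with $M=(1-\kappa)^{-1/2}$ and $\gamma=-\log(1-\kappa)/(2\vartheta)>0$, both depending only on $A,B,T,\mu$ through $c$ and $\vartheta$.

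Conceptually there is no real obstacle once Lemma~\ref{lem:lem0} is in hand: the lemma precisely replaces the dependence on the (hard to control) observable $\|B^*z(t)\|_U$ along the perturbed trajectory by the (controllable by Hypothesis~\ref{hyp:thm2}) observable $\|B^*e^{tA}z(a)\|_U$ along the free trajectory. The only mildly delicate point is the translation-invariance of the PE class, which allows the one-step estimate to be applied at every starting time $a$ with the same constants, and this is recorded in the remarks preceding Section~\ref{s:exponential}.
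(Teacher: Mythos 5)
Your proof is correct and follows essentially the same route as the paper: apply Lemma~\ref{lem:lem0} on $[s,s+\vartheta]$, invoke the translation-invariance of the $T$-$\mu$ PE class to use Hypothesis~\ref{hyp:thm2} with initial datum $z(s)$, and obtain the same one-step contraction factor $1-c/(1+\vartheta^2\|B\|^4)$. The only difference is that you spell out the iteration and the interpolation via monotonicity of $V$, which the paper dismisses as ``standard arguments.''
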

\begin{proof}
Fix some $T$-$\mu$ PE-signal $\alpha(\cdot)$ and some $s \geq 0$, and define $V$ by \eqref{eq:Vdef}. Lemma~\ref{lem:lem0} with $a=s$ and  $b=s+\vartheta$, where $\vartheta$ is as in Hypothesis~\ref{hyp:thm2}, then yields
\begin{align*}
V(z(s+\vartheta))-V(z(s)) & \leq -\frac{1}{2(1+\vartheta^2 \|B\|^4)}\int_0^{\vartheta}\alpha(t+s)\|B^* e^{tA}z(s)\|_U^2\,dt.
\end{align*}
Again using that $\alpha(\cdot+s)$ is a $T$-$\mu$ PE-signal, Hypothesis~\ref{hyp:thm2} then implies
\begin{equation*}
V(z(s+\vartheta))-V(z(s)) \leq -\frac{c}{(1+\vartheta^2 \|B\|^4)}V(z(s)). 
\end{equation*}
The desired estimate \eqref{eq:thm2est} then follows from standard arguments.
\end{proof}

Example~\ref{ex_wave_2} below illustrates an application of Theorem~\ref{thm2}. We consider again the model of a damped string introduced in Example~\ref{exp-wave},
replacing the localized damping given in \eqref{example-nonstability} (which, as we proved, gives rise to non-stabilizability)
by a damping acting almost everywhere. The argument is presented for
the general case of the damped wave equation (the string corresponding to the case $N=1$).

\begin{example} (Wave equation)\label{ex_wave_2}
Let $N\geq 1$ and consider a $N$-dimensional version of system \eqref{wellengleichung}--\eqref{RB} introduced in Example~\ref{exp-wave}:
\begin{align}
\label{wellengleichung_2}
v_{tt}(t,x)&= \Delta v(t,x)-\alpha(t)d(x)^2 v_t(t,x),\quad &(t,x)\in (0,\infty) \times \Omega,\\
\label{initial_2} v(0,x)&= y_0(x),\quad &x\in \Omega,\\
\label{initial2_2}  v_t(0,x)&= y_1(x),\quad &x\in \Omega,\\
\label{RB_2} v(t,x)& = 0,\quad &(t,x)\in (0,\infty) \times  \partial \Omega,
\end{align}
where $\Omega$ is a bounded domain in $\RR^N$ and $d\in L^\infty(\Omega)$
satisfies
\begin{equation*}
|d(x)|\geq  d_0>0\quad \mbox{for almost all}~x\in \Omega.
\end{equation*}
We claim that in this case Hypothesis~\ref{hyp:thm2} is satisfied with $\vartheta=T$, taking
$H=H^1_0(\Omega)\times L^2(\Omega)$
with norm
$$\|(z_1,z_2)\|^2=\|\nabla z_1\|^2_{L^2(\Omega)}+\| z_2\|^2_{L^2(\Omega)}.$$

Denote by $(\phi_n)_{n\in \NN}$
an orthonormal basis of $L^2(\Omega)$ made of eigenfunctions of the Laplace--Dirichlet
operator on $\Omega$. For each $n\in\NN$, let $\lambda_n> 0$ be
the eigenvalue corresponding to $\phi_n$. Recall that
$\lambda_n$
goes to infinity as $n\to\infty$.

Let $t\mapsto z(t)=(v(t,\cdot),v_t(t,\cdot))$ be a solution of \eqref{wellengleichung_2}--\eqref{RB_2} with initial condition
$(y_0(\cdot),y_1(\cdot))=(\sum_{n\in\NN} a_n\phi_n(\cdot), \sum_{n=1}^\infty \sqrt{\lambda_n} b_n\phi_n(\cdot))$, where $(\sqrt{\lambda_n}a_n)_{n\in \NN}$ and $(\sqrt{\lambda_n}b_n)_{n\in\NN}$ belong to $\ell^2$. By definition, $\|z(0)\|_H^2=\sum_{n\in\NN} \lambda_n(a_n^2+b_n^2)$
and
$$v(t,x)=\sum_{n\in\NN} a_n\phi_n(x)\cos(\sqrt{\lambda_n} t)+\sum_{n=1}^\infty b_n\phi_n(x)\sin(\sqrt{\lambda_n} t).$$

Then
\begin{align*}
 &\int_0^T\alpha(t)\|B^* z(t)\|_U^2\,dt\geq
 d_0^2\int_0^T\int_\Omega\alpha(t)|v_t(x,t)|^2\,dx\,dt\\
 &\quad=d_0^2\int_0^T\int_\Omega \alpha(t)
 \left(\sum_{n\in\NN} \lambda_n (-a_n\sin(\sqrt{\lambda_n} t)+b_n
 \cos(\sqrt{\lambda_n} t))\phi_n(x)\right)^2\,dx\,dt\\
&\quad= d_0^2\sum_{n\in\NN} \lambda_n \int_0^T \alpha(t)( -a_n \sin({ \sqrt{\lambda_n}  t}) + b_n\cos({\sqrt{\lambda_n} t}))^2 dt,
\end{align*}
where we used that, for all $n,m\in \NN$,
\[\int_0^1 \phi_n(x)\phi_m(x)dx = \delta_{nm}.\]
We are left to prove that there exist $c_0>0$ independent of $n\in\NN$, $a_n,b_n\in \RR$ and of
the $T$-$\mu$ signal $\alpha$
such that
\begin{equation}\label{sinis}
\int_0^T \alpha(t) ( -a_n \sin({ \sqrt{\lambda_n} t}) + b_n\cos({\sqrt{\lambda_n} t}))^2 dt
\geq c_0(a_n^2+b_n^2).
\end{equation}
For every $\epsilon\in(0,1)$, let
\begin{equation}\label{eq:AepsDef}
A_n^\epsilon=\{t\in[0,T]\mid | -a_n \sin({ \sqrt{\lambda_n} t}) + b_n\cos({\sqrt{\lambda_n} t})|>\epsilon \sqrt{a_n^2+b_n^2}\}.
\end{equation}
Notice that $-a_n \sin({ \sqrt{\lambda_n} t}) + b_n\cos({\sqrt{\lambda_n} t})=\sqrt{a_n^2+b_n^2}\sin({ \sqrt{\lambda_n} t}+\theta_n)$ for some $\theta_n\in\RR$. Hence,
$$|-a_n \sin({ \sqrt{\lambda_n} t}) + b_n\cos({\sqrt{\lambda_n} t})|\leq \lambda_n \sqrt{a_n^2+b_n^2}|t-t_0|$$
for every $t_0$ belonging to $\{t_0\mid \sin(\lambda_n t_0+\theta_n)=0\}=\frac{\pi}{\lambda_n}\ZZ-\frac{\theta_n}{\lambda_n}$.
In particular, $[0,T]\setminus A_n^\epsilon$ is contained in the
set of points with a distance from $\frac{\pi}{\lambda_n}\ZZ-\frac{\theta_n}{\lambda_n}$ smaller than $\epsilon/\lambda_n$, i.e., in
the union of intervals of length $2\epsilon/\lambda_n$ centered at elements of $\frac{\pi}{\lambda_n}\ZZ-\frac{\theta_n}{\lambda_n}$.
Therefore,
\begin{align}
\nonumber
\text{meas}(A_n^\epsilon)&\geq T-2\frac{\epsilon}{\lambda_n}
\#\left([0,T]\cap \left(\frac{\pi}{\lambda_n}\ZZ-\frac{\theta_n}{\lambda_n}\right)\right)
\geq T-2\frac{\epsilon}{\lambda_n}\left(\frac{T\lambda_n}{\pi}+1\right)
\\&
\geq T\left(1-\frac{2\epsilon}\pi\right)-2\frac\epsilon{\min_{n\in\NN}\lambda_n}.\label{meas--}
\end{align}
Thus, the measure of $A_n^\epsilon$ tends  to $T$ as $\epsilon$ goes to zero, uniformly with respect to the triple $(n,a_n,b_n)$.  In particular, there exists $\bar\epsilon>0$ such that
for every $n\in\NN$, $a_n,b_n\in \RR$ and every $T$-$\mu$ signal $\alpha$,
$$
\int_{A_n^{\bar\epsilon}} \alpha(t)dt\geq \frac\mu 2.
$$

Then
\begin{align*}
\int_0^T \alpha(t)( -a_n\sin({ \sqrt{\lambda_n} t}) + b_n\cos({\sqrt{\lambda_n} t}))^2 dt&\geq {\bar\epsilon}^2\, (a_n^2+b_n^2)\int_{A_n^{\bar\epsilon}} \alpha(t)dt\\
&\geq \frac{\mu{\bar\epsilon}^2}2 (a_n^2+b_n^2),
\end{align*}
proving \eqref{sinis} with $c_0={\mu{\bar\epsilon}^2}/2$. \hfill$\diamond$
\end{example}

\begin{remark}\label{rem:haraux}
The example presented above shows that the sufficient condition for asymptotic
stability of abstract second order evolution equations with \emph{on/off
damping} considered in \cite{HarauxMartinezVancostenoble2005} is not necessary, as detailed here below.
The question of its necessity had been raised in \cite[p. 2522]{FragnelliMugnai2008}.

Extending a result of \cite{Smith1961} for ordinary differential equations,
it was shown in \cite{HarauxMartinezVancostenoble2005} that existence of a
sequence of open disjoint intervals $I_n$ of length $T_n$ such that
\begin{equation}\label{eq:smithharaux}
\sum_{n=1}^\infty m_n T_n \min\left( T_n^2,(1+m_n M_n)^{-1} \right)=\infty
\end{equation}
and existence of constants $M_n \geq m_n>0$ such that
\begin{equation}\label{eq:smithharaux2}
	m_n \leq \alpha(t) \leq M_n,\quad t \in I_n,
\end{equation}
implies asymptotic stability of systems whose prototype is
\eqref{wellengleichung_2}--\eqref{RB_2}.

Taking, for example, $I_n = (s_n,s_n+\frac1n)$ with
\begin{equation*}
s_n=\sum_{k=1}^{n-1} \frac2k
\end{equation*}
and $\alpha(\cdot)$ piecewise constant such that \eqref{eq:smithharaux2} holds
with $m_n=M_n=1$, 
the sum in \eqref{eq:smithharaux} converges, but for $T=2$,
{
\begin{equation*}
\int_t^{t+T} \alpha(s)\,ds \geq \mu,\qquad t \geq 0,
\end{equation*}
for some $\mu>0$, as it easily follows by noticing that $\lim_{t\to +\infty}\int_t^{t+T} \alpha(s)\,ds=1/2$.
\hfill$\diamond$
}
\end{remark}

Another example that one could consider is the
Schr\"odinger equation with internal damping. Because of the infinite speed of propagation of the Schr\"odinger equation, it is a natural question whether, differently form the case of the wave equation,
stability can be achieved by a localized damping.
We are not able to give an answer to this question (detailed below), which we leave as an open problem.

\begin{example} (Schr\"odinger equation)\label{exp_Sch_1D}
 Consider
\begin{align}
 	i y_t(t,x) + y_{xx}(t,x) + i \alpha(t)d(x)^2 y(t,x)& = 0, 	\quad&(t,x)\in(0,\infty)\times(0,1),\label{schroedingerPDE_1D1}\\
 	y(t,0)=y(t,1)&=0	,					\quad& t\in (0,\infty),\label{schroedingerPDE_1D2}\\
 	y(0,x) &= y_0(x),					\quad& t\in(0,1),	\label{schroedingerPDE_1D3}
\end{align}
with $d(\cdot) \in L^\infty(0,1)$ and $\alpha(\cdot)$ being a $T$-$\mu$ PE-signal. Assume that $d=\chi_\omega$ with $\omega=(a,b)$ a nonempty subinterval of $(0,1)$.

In order to write system
\eqref{schroedingerPDE_1D1}--\eqref{schroedingerPDE_1D3} in the form \eqref{sys}, let $H=U=L^2(0,1)$, define $A$ as $Az=iz_{xx}$, acting on $D(A)=\H^2(0,1)\cap\H^1_0(0,1)$, and let $B\:z\mapsto \chi_\omega z$ be the multiplication operator by the function $\chi_\omega=d$.
Then, for $y_0 \in H$, the mild solution $z(\cdot)$ of \eqref{sys} with this choice of $A$ and $B$ corresponds to the weak solution $y(\cdot)$ of \eqref{schroedingerPDE_1D1}--\eqref{schroedingerPDE_1D3} (see \cite{Ball1977}).

Since $A$ is skew-adjoint, in order to apply Theorem~\ref{thm2} we should prove
that Hypothesis~\ref{hyp:thm2} holds true.
More explicitly, we should prove that there exist $\vartheta,c>0$ such that, for each
$z_0\in L^2(0,1)$ and each $T$-$\mu$ PE-signal $\alpha$,
$$ \int_0^\vartheta\int_a^b \alpha(t)|(e^{tA}z_0)(x)|^2\,dx\,dt \geq c\int _0^1
|z_0(x)|^2dx.$$
In order to fix the ideas, let us take $\vartheta=T>\mu$.
The question can be rephrased by asking whether there exists $c>0$ such that
for every $\Xi\subset [0,T]$ of measure equal to $\mu$,
\begin{equation}\label{conj}
 \int_\Xi\int_a^b |\sum_{n\in\NN}\langle
 \phi_n,z_0\rangle_{L^2(0,1)}\phi_n(x)e^{i n^2 \pi^2 t}|^2\,dx\,dt \geq c\int
 _0^1 |z_0(x)|^2dx,
\end{equation}
with $\phi_n(x)=\sqrt{2}\sin(n\pi x)$.
This problem is, up to our knowledge, open.

The question is somehow related with a discussion presented by
Seidman in \cite{Seidman1986}, where it is
conjectured that, among all such sets $\Xi$,
the maximal constant in \eqref{conj} (uniform with respect to $z_0$) is obtained for intervals.

Notice that in the case $\omega=(0,1)$ inequality \eqref{conj} is satisfied because the $L^2$ norm of $z(t)$ is constant with respect to $t$.
Because of the full damping in space, the techniques developed by Fattorini in \cite{Fattorini2005} would also apply, yielding the required generalized observability inequality.
\hfill$\diamond$
\end{example}

\section{Weak stability under persistent excitation}\label{s:weak}

Our main result is that weak asymptotic stability holds when the pair $(A,B)$ has the following
$T$-$\mu$ PE unique continuation property, which weakens Hypothesis~\ref{hyp:thm2}.
\begin{hypothesis}\label{hyp:thm1}\mbox{}
There exists $\vartheta>0$ such that for all $T$-$\mu$ PE-signals $\alpha(\cdot)$
\begin{equation}\label{eq:hypthm1}
 \int_0^\vartheta \alpha(t) \|B^* e^{tA}z_0\|^2_U\,dt = 0 \quad\Rightarrow\quad z_0 = 0.
\end{equation}
\end{hypothesis}

We will prove the following.


\begin{theorem}\label{thm:weak} Under Hypothesis~\ref{hyp:thm1}, the mild solution $t\mapsto z(t)$ of system \eqref{sys} converges weakly to $0$ in $H$ as $t\to\infty$ for any initial data $z_0 \in H$ and any $T$-$\mu$ PE-signal $\alpha(\cdot)$.
\end{theorem}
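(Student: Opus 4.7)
The plan is to argue by contradiction. First I observe that by \eqref{Vdecay}, $t\mapsto V(z(t))$ is non-increasing and non-negative, hence converges to some $\ell\geq 0$. Assuming $z(t)\not\weakto 0$, boundedness ($\|z(t)\|_H\leq\|z_0\|_H$) and weak sequential compactness give $t_n\to\infty$ and $z^*\in H\setminus\{0\}$ with $z(t_n)\weakto z^*$; by the weak-$*$ compactness of the class of $T$-$\mu$ PE-signals recalled in \eqref{eq:PEcompactness}, I may also extract $\alpha_n(\cdot):=\alpha(t_n+\cdot)\weakstarto \alpha_\infty$, with $\alpha_\infty$ itself a $T$-$\mu$ PE-signal. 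The goal is then to prove
\[
\int_0^\vartheta \alpha_\infty(s)\,\|B^*e^{sA}z^*\|_U^2\,ds = 0,
\]
since by Hypothesis~\ref{hyp:thm1} this forces $z^*=0$, contradicting $z^*\neq 0$.

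Lemma~\ref{lem:lem0} applied on $[t_n,t_n+\vartheta]$, combined with $V(z(t_n)),V(z(t_n+\vartheta))\to \ell$, yields $\int_0^\vartheta \alpha_n(s)\,\|B^*e^{sA}z(t_n)\|_U^2\,ds \to 0$. Writing $z(t_n)=z^*+u_n$ (so $u_n\weakto 0$) and applying the pointwise inequality $\|x+y\|_U^2\geq \|x\|_U^2+2\RE\langle x,y\rangle_U$ with $x=B^*e^{sA}z^*$, $y=B^*e^{sA}u_n$, multiplying by $\alpha_n\geq 0$, integrating, and using $\langle B^*h_1,B^*h_2\rangle_U=\langle BB^*h_1,h_2\rangle_H$ together with $(e^{sA})^*=e^{sA^*}$ produces
\[
\int_0^\vartheta \alpha_n\,\|B^*e^{sA}z^*\|_U^2\,ds + 2\RE\langle w_n, u_n\rangle_H \;\leq\; \int_0^\vartheta \alpha_n\,\|B^*e^{sA}z(t_n)\|_U^2\,ds,
\]
where $w_n:=\int_0^\vartheta \alpha_n(s)\,e^{sA^*}BB^*e^{sA}z^*\,ds\in H$. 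The first left-hand term converges to the desired integral since $\alpha_n\weakstarto \alpha_\infty$ and $s\mapsto \|B^*e^{sA}z^*\|_U^2$ is a fixed $L^1$ function on $[0,\vartheta]$, so what remains is to establish $\langle w_n,u_n\rangle_H\to 0$.

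Here the compactness argument enters, and this is the main technical obstacle. The map $f\colon s\mapsto e^{sA^*}BB^*e^{sA}z^*$ is continuous from $[0,\vartheta]$ into $H$, hence $f([0,\vartheta])$ is compact in $H$. For each $\varepsilon>0$ I would approximate $f$ uniformly by a step function $f_\varepsilon=\sum_{j=1}^N \chi_{E_j}\,e_j$ with values in a finite set $\{e_1,\dots,e_N\}\subset H$, satisfying $\sup_s\|f(s)-f_\varepsilon(s)\|_H<\varepsilon$. Testing $\alpha_n\weakstarto\alpha_\infty$ against each $\chi_{E_j}\in L^1$ gives $\int_0^\vartheta \alpha_n f_\varepsilon\,ds\to \int_0^\vartheta \alpha_\infty f_\varepsilon\,ds$ strongly in $H$; combining with $\|\alpha_n\|_\infty\leq 1$ and the triangle inequality yields $\limsup_n\|w_n-w_\infty\|_H\leq 2\varepsilon\vartheta$, where $w_\infty:=\int_0^\vartheta\alpha_\infty(s)f(s)\,ds$. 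Letting $\varepsilon\to 0$ gives $w_n\to w_\infty$ strongly in $H$. Hence $\langle w_n,u_n\rangle_H=\langle w_n-w_\infty,u_n\rangle_H+\langle w_\infty,u_n\rangle_H\to 0$ (strong-times-bounded plus fixed-against-weak-null). Taking $\liminf$ in the displayed inequality gives $\int_0^\vartheta \alpha_\infty\|B^*e^{sA}z^*\|_U^2\,ds\leq 0$, and the non-negativity of the integrand forces equality, contradicting Hypothesis~\ref{hyp:thm1}. The crux is precisely that weak-$*$ convergence of $\alpha_n$ does not in general imply strong $H$-convergence of vector-valued integrals $\int \alpha_n h$ for arbitrary $h\in L^\infty([0,\vartheta];H)$, so one must exploit the continuity (and resulting precompactness) of the specific integrand $f$.
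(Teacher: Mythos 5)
Your proof is correct, but it takes a genuinely different route from the paper's. The paper works with the full translated trajectories $z_n(\cdot)=z(\cdot+s_n)$: it proves equicontinuity in $C([0,\vartheta];H_w)$, applies Arzel\`a--Ascoli to extract a limit curve $z_\infty(\cdot)$, passes to the limit in the Duhamel formula to identify $z_\infty(\cdot)$ as a mild solution driven by $\alpha_\infty$, establishes the lower semicontinuity $\liminf_n\int_0^\vartheta\alpha_n\|B^*z_n\|_U^2\,dt\geq\int_0^\vartheta\alpha_\infty\|B^*z_\infty\|_U^2\,dt$ via an Egorov-type decomposition, and only then concludes that the damping term vanishes so that $z_\infty(t)=e^{tA}z_\infty$. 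You sidestep the entire limit-trajectory analysis by exploiting the fact that Lemma~\ref{lem:lem0} already expresses the energy drop on $[t_n,t_n+\vartheta]$ in terms of the \emph{free} evolution $e^{tA}z(t_n)$, so that only weak convergence of the single points $z(t_n)$ is needed; the quadratic expansion $\|x+y\|^2\geq\|x\|^2+2\RE\la x,y\ra$ then reduces the problem to showing that the Gramian-type vectors $w_n=\int_0^\vartheta\alpha_n(s)e^{sA^*}BB^*e^{sA}z^*\,ds$ converge strongly, which you obtain correctly from the weak-$*$ convergence of $\alpha_n$ tested against the norm-compact range of the continuous integrand. All the individual steps check out (in particular $V(z(t_n)),V(z(t_n+\vartheta))\to\ell$ makes the right-hand side of Lemma~\ref{lem:lem0} vanish, the first left-hand term converges because $s\mapsto\|B^*e^{sA}z^*\|_U^2$ is a fixed $L^1$ test function, and the cross term is handled by the strong-against-weak pairing). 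What your approach buys is a shorter argument that avoids Arzel\`a--Ascoli in the weak topology and the delicate $\liminf$ inequality \eqref{liminfbnd}; what the paper's approach buys is additional structural information (the identification of the weak limit dynamics as the undamped semigroup), which is not needed for the statement itself.
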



\begin{proof}
 It suffices to prove that, 
for each $z_0 \in H$ and for each $T$-$\mu$ PE-signal $\alpha(\cdot)$, the \emph{weak $\omega$-limit set}
\begin{equation*}
\begin{aligned}
 \omega(z_0,\alpha(\cdot)) = \{z \in H \mid~&\text{there exists a sequence}~\{s_n\}_{n\in\NN},~s_n\to\infty,~\text{so that}\\&z(s_n;z_0) \weakto z~\text{as}~n\to\infty\}
\end{aligned}
\end{equation*}
is non-empty and, 
taking $\vartheta>0$ as in Hypothesis~\ref{hyp:thm1}, 
\begin{equation}\label{eq:lemvanish}
z_\infty\in\omega(z_0,\alpha(\cdot))
\Rightarrow\exists\; \alpha_\infty\mbox{ $T$-$\mu$ PE-signal s.\,t. }
\int_0^\vartheta \alpha_\infty(t) \|B^* e^{tA} z_\infty\|_U^2\,dt = 0.
\end{equation}
The assertion of the theorem then follows from \eqref{eq:hypthm1}.

Let $z_0 \in H$ and a $T$-$\mu$-persistent excitation signal $\alpha(\cdot)$ be given. Let $z(\cdot;z_0)$ be the unique mild solution of the system \eqref{sys}
and define $V$ as in \eqref{eq:Vdef}.


From the energy inequality \eqref{Vdecay}, one obtains that the weak $\omega$-limit set $\omega(z_0,\alpha(\cdot))$ is non-empty. So let $z_\infty\in\omega(z_0,\alpha(\cdot))$ be an element of the weak $\omega$-limit set and let $\{s_n\}_{n\in\NN}$,~$s_n\to\infty$ be a sequence of times such that $z(s_n;z_0) \weakto z_\infty$ as $n\to\infty$.

We consider the translations
\begin{equation*}
 z_n(s)=z(s +s_n;z_0) \quad \alpha_n(s)=\alpha(s +s_n)
\end{equation*}
and we note that $z_n(\cdot)$ is the mild solution of system \eqref{sys} for the $T$-$\mu$ PE-signal $\alpha_n(\cdot)$ and initial condition $z_n(0)=z(s_n;z_0)$, i.\,e., $z_n(\cdot)$ satisfies
\begin{equation}\label{ShiftsMildSol}
 z_n(s)=e^{s A}z(s_n;z_0)-\int_0^s e^{(s-t)A}\alpha_n(t) B B^* z_n(t)\,dt.
\end{equation}

Therefore, we have the energy estimates
\begin{equation}\label{VdecayShifts}
 V(z_n(s)) - V(z(s_n;z_0)) \leq - \int_0^s \alpha_n(t) \|B^* z_n(t)\|_U^2\,dt\quad\text{for all}~ s \geq 0.
\end{equation}
From \eqref{VdecayShifts} and \eqref{Vdecay} we get
\begin{equation}\label{ShiftsBound}
 \|z_n(s)\|_H \leq \|z(s_n;z_0)\|_H \leq \|z_0\|_H,\quad s\geq 0,
\end{equation}
and thus, for any $\vartheta \geq 0$, we have that $\{z_n(\cdot)\}_{n\in\NN}$ is a bounded subset of
$C([0,\vartheta];H)$.
Choose $\vartheta>0$ as in Hypothesis~\ref{hyp:thm1}.

We claim that
\begin{equation}\label{weakconv}
z_n(s) \weakto z_\infty(s)\mbox{ as }n\to\infty,\quad\text{for all}~s \in [0,\vartheta],
\end{equation}
where $z_\infty(\cdot)$ is the mild solution of the undamped equation
\begin{equation}\label{sysundamped}
\left\{\begin{aligned}
&\dot{z}(s) = A z(s)\\
&z(0) = z_\infty.
\end{aligned}\right.
\end{equation}
Indeed, much as in \cite{BallSlemrod1979}, we can show that $\{z_n\}_{n\in\NN}$ is equicontinuous in $C([0,\vartheta];H_w)$, where $H_w$ is $H$ endowed with the weak topology. To verify this,
let $s_r \searrow s$ in $[0,\vartheta]$ and select some $\psi\in H$. From \eqref{ShiftsMildSol}, we have that
\begin{equation}\label{Est1}
 \begin{aligned}
  |\la z_n(s_r)-z_n(s),\psi \ra| \leq~&\left|\left\la [e^{s_rA}-e^{sA}]z(s_n;z_0),\psi \right\ra \right|\\
					 &~+ \int_0^s\left|\left\la  [e^{(s_r-t)A}-e^{(s-t)A}]\alpha_n(t)BB^*z_n(t),\psi\right\ra\right|\,dt\\
			&~+ \int_s^{s_r} \left|\la e^{(s_r-t)A}\alpha_n(t) BB^*z_n(t),\psi\ra\right|\,dt.
 \end{aligned}
\end{equation}
Moreover, using \eqref{ShiftsBound}, we have that
\begin{equation*}
 \|\alpha_n(t)BB^*z_n(t)\|_H \leq \|\alpha_n(t)\|_\RR \|BB^*\|_{\Lcal(H)}\|z_n(t)\|_H \leq \mathrm{const.}\|z_0\|_H
\end{equation*}
and, as proved in \cite[Theorem~2.3]{BallSlemrod1979},
\begin{equation*}
 a_r=\sup_{\|\phi\|_H \leq 1,~0\leq t\leq s} |\la[e^{(s-t)A}-e^{(s_r-t)A}]\phi,\psi\ra| \to 0\quad\text{as}~r\to\infty.
\end{equation*}
Thus, from \eqref{Est1} we get
\begin{equation*}
   |\la z_n(s_r)-z_n(s),\psi \ra| \leq \mathrm{const.}a_r\|z_0\|_{H}+\mathrm{const.}|s_r-s|,
\end{equation*}
and hence
\begin{equation}\label{UniformEst1}
   |\la z_n(s_r)-z_n(s),\psi \ra| \to 0\quad\text{uniformly as}~r\to\infty.
\end{equation}
Similarly, one shows that \eqref{UniformEst1} holds for $s_r\nearrow s$ in $[0,\vartheta]$. Thus, $\{z_n\}_{n\in\NN}$ is equicontinuous in $C([0,\vartheta];H_w)$. Again using  that $\{z_n(s)\mid n\in\NN,\;s\in[0,\vartheta]\}$ is bounded in $H$ by \eqref{ShiftsBound}, we may view $\{z_n\}_{n\in\NN}$ as an equibounded set of curves  in $H$ endowed with the metrized weak topology. Hence we can apply
the Arzela--Ascoli theorem for metric spaces to conclude that there exists $z_\infty(\cdot)\in C([0,\vartheta];H_w)$
and a subsequence that we re-label by $n\in\NN$ so that $z_n(s) \weakto z_\infty(s)$ uniformly on $[0,\vartheta]$ as $\nu\to\infty$. Moreover, for any $\psi \in H$ we have from \eqref{ShiftsMildSol}
by adding and subtracting $\la e^{(s-t)A} \alpha_n(t) B B^* z_\infty(t),\psi\ra$ under the integral that
\begin{equation}\label{ShiftsMildSolRearranged}
\begin{aligned}
 \la z_n(s),\psi\ra =~&\la e^{sA}z(s_n;z_0),\psi\ra - \int_0^s \alpha_n(t) \la e^{(s-t)A}  B B^* z_\infty(t),\psi\ra\,dt\\
& -\int_0^s \alpha_n(t) \la e^{(s-t)A}  B B^* [z_n(t)-z_\infty(t)],\psi\ra\,dt.
\end{aligned}
\end{equation}
Using that $\alpha_n(t)$ is a bounded sequence for 
$t\in[0,s]$ and that
\begin{equation*}
\la e^{(s-t)A}  B B^* [z_n(t)-z_\infty(t)],\psi\ra \to 0\quad\text{as}~\nu\to\infty
\end{equation*}
for all $t\in[0,s]$, we can conclude from the dominated convergence
theorem that
\begin{equation*}
 \int_0^s \alpha_n(t) \la e^{(s-t)A}  B B^* [z_n(t)-z_\infty(t)],\psi\ra\,dt \to 0,\quad\text{as}~n\to\infty.
\end{equation*}
Hence, by sequential weak$^*$-compactness of $L^\infty([0,\infty);[0,1])$, we can extract another subsequence that we again re-label by $n\in\NN$ and pass to the limit in \eqref{ShiftsMildSolRearranged}, obtaining that, for every $s \geq 0$,
\begin{equation}\label{ShiftsWeakLimit}
 \la z_\infty(s),\psi\ra =\la e^{sA}z_\infty,\psi\ra-\int_0^s \la e^{(s-t)A}\alpha_\infty(t) B B^* z_\infty(t),\psi\ra\,dt,
\end{equation}
where $\alpha_\infty(\cdot)$ again is  a $T$-$\mu$ PE-signal. Since \eqref{ShiftsWeakLimit} holds for all $\psi \in H$, we have
\begin{equation*}
 z_\infty(s) = e^{sA}z_\infty-\int_0^s e^{(s-t)A}\alpha_\infty(t) B B^* z_\infty(t)\,dt.
\end{equation*}

Next we show that
\begin{equation}\label{intvanishes}
 \int_0^s e^{(s-t)A}\alpha_\infty(t) B B^* z_\infty(t)\,dt=0.
\end{equation}

Since $V(z_n(0))$ is bounded and monotone, it has a limit $V^*=\lim_{n\to\infty} V(z_n)$, so that
\begin{equation*}
 \int_0^\vartheta \alpha_n(t)\|B^* z_n(t)\|_U^2\,dt \leq V(z_n(0)) - V^* \to 0\quad\text{as}~n\to\infty.
\end{equation*}
Hence
\begin{equation}\label{inttozero}
 \int_0^\vartheta \alpha_n(t)\|B^* z_n(t)\|_U^2\,dt \to 0\quad\text{as}~n\to\infty.
\end{equation}
Morover, \eqref{weakconv} and $\alpha_n \weakstarto \alpha_\infty$ imply 
\begin{equation}\label{liminfbnd}
 \liminf_{n\to\infty} \int_0^\vartheta \alpha_n(t)\|B^* z_n(t)\|_U^2\,dt \geq \int_0^\vartheta \alpha_\infty(t)\|B^*z_\infty(t)\|_U^2\,dt.
\end{equation}
To see this, observe that \eqref{weakconv} implies
\begin{equation}\label{normest}
 \|B^*z_\infty(t)\|_U \leq \liminf_{n\to\infty} \|B^*z_n(t)\|_U,\quad~\text{for all}~t\in[0,\vartheta].
\end{equation}
Fix any $\epsilon>0$ and define, for all $m\in\NN$,
\begin{equation*}
 S^\epsilon_m=\{t\in[0,\vartheta] \mid \|B^*z_n(t)\|_U^2 \geq \|B^*z_\infty(t)\|_U^2-\epsilon~\text{for all}~n \geq m\}.
\end{equation*}
From \eqref{normest} we have
\begin{equation*}
 [0,\vartheta]=\bigcup_m S^\epsilon_m\quad (S^\epsilon_m \supseteq S^\epsilon_{m-1}),
\end{equation*}
hence there exists $m(\epsilon)$ such that $|S^{\epsilon}_{m(\epsilon)}| > T-\epsilon$.
Then we have, for $n \geq m(\epsilon)$,
\begin{equation}\label{threeI}
\begin{aligned}
 \int_0^T\alpha_n(t)\|B^*z_n(t)\|_U^2\,dt =~&\int_{S^\epsilon_{m(\epsilon)}} \alpha_n(t)\left( \|B^*z_n(t)\|_U^2 - \|B^*z_\infty(t)\|_U^2 +\epsilon\right)\,dt\\
&+\int_{S^\epsilon_{m(\epsilon)}} \alpha_n(t)\left( \|B^*z_\infty(t)\|_U^2 -\epsilon\right)\,dt\\
&+\int_{[0,\vartheta] \setminus S^\epsilon_{m(\epsilon)}} \alpha_n(t)\|B^*z_n(t)\|_U^2\,dt.
\end{aligned}
\end{equation}
The first integral in the right-hand side of \eqref{threeI}
is non-negative because $\|B^*z_n(t)\|_U^2 - \|B^*z_\infty(t)\|_U^2 +\epsilon \geq 0$
for all $t\in S^\epsilon_{m(\epsilon)}$ and $\alpha_n(t)\geq 0$ for all $t\in[0,\vartheta]$. The second integral is bounded from below by
\begin{equation*}
 \int_{S^\epsilon_{m(\epsilon)}} \alpha_n(t) \|B^*z_\infty(t)\|_U^2\,dt - \epsilon\vartheta \geq \int_0^\vartheta \alpha_n(t) \|B^*z_\infty(t)\|_U^2\,dt - \epsilon(\text{const.} + \vartheta)
\end{equation*}
as it follows from \eqref{ShiftsBound}. Finally, the third integral is non-negative, again because $\alpha_n(t)\geq 0$ for all $t\in[0,\vartheta]$. Thus, for all $n \geq m(\epsilon)$,
\begin{equation*}
 \int_0^\vartheta \alpha_n(t)\|B^*z_n(t)\|^2_U\,dt \geq \int_0^\vartheta \alpha_n(t)\|B^*z_\infty(t)\|_U^2\,dt-\epsilon(C+\vartheta).
\end{equation*}
Hence, by the convergence $\alpha_n(\cdot)\weakstarto\alpha_\infty(\cdot)$,
\begin{equation*}
 \liminf_{n\to\infty} \int_0^\vartheta \alpha_n(t)\|B^* z_n(t)\|_U^2\,dt \geq \int_0^\vartheta \alpha_\infty(t)\|B^*z_\infty(t)\|_U^2\,dt -\epsilon(C+T),
\end{equation*}
proving \eqref{liminfbnd} from the fact that $\epsilon$ is arbitrary.

From \eqref{liminfbnd} and \eqref{inttozero}, we have
\begin{equation}\label{intvanishesnorm}
 0 = \lim_{n\to\infty} \int_0^\vartheta \alpha_n(t)\|B^*z_n(t)\|_U^2\,dt = \int_0^\vartheta \alpha_\infty(t)\|B^*z_\infty(t)\|_U^2\,dt,
\end{equation}
so either $\alpha_\infty(t)=0$ or $B^*z_\infty(t)=0$ for almost every $t\in[0,\vartheta]$.
This proves \eqref{intvanishes} and hence $z_\infty(\cdot)$ solves, as claimed, the undamped equation \eqref{sysundamped}.

Finally, since $z_{\infty}$ solves \eqref{sysundamped}, we have $z_\infty(s)=e^{sA}z_\infty$ and thus \eqref{intvanishesnorm} implies \eqref{eq:lemvanish}, as required.
%
%
\end{proof}


In the example below we go back to the
internally damped Schr\"odinger equation considered in Example~\ref{exp_Sch_1D},
where we were not able to conclude whether such equation is strongly stable, uniformly with respect to all $T$-$\mu$ signals ($T>\mu>0$ given). We prove here, in the general $N$-dimensional case,
that weak stability holds true.

\begin{example}\label{weak_Sch} (Schr\"odinger equation)
Let $\Omega$ be a bounded domain of $\RR^N$, $N \geq 1$,
and consider the internally damped Schr\"odinger equation
\begin{align}
 	i y_t(t,x) +\laplace y(t,x) + i \alpha(t)d(x)^2 y(t,x)& = 0, 	\quad&(t,x)\in(0,\infty)\times \Omega,\label{schroedingerPDE1}\\
 	y(t,x)&=0	,					\quad& t\in (0,\infty)\times \partial \Omega,\label{schroedingerPDE2}\\
 	y(0,x) &= y_0(x),					\quad& t\in \Omega,	\label{schroedingerPDE3}
\end{align}
where $d(\cdot)$ belongs to $L^\infty(\Omega)$ and $\alpha(\cdot)$ is a $T$-$\mu$ PE-signal. Assume that
there exist $d_0 > 0$ and an open nonempty $\omega\subset\Omega$ such that
\begin{equation}\label{bcond}
|d(x)| \geq  d_0~\mbox{for a.\,e. $x$ in}~\omega.
\end{equation}

As in Example~\ref{exp_Sch_1D}, system \eqref{schroedingerPDE1}--\eqref{schroedingerPDE3}
can be written
in the form \eqref{sys} with $H=U=L^2(\Omega)$, $Az=i\laplace z$, $D(A)=\H^2(\Omega)\cap\H^1_0(\Omega)$, and $B\:z\mapsto d z$.
Since $A$ is skew-adjoint, it generates a contraction semigroup.

As to apply Theorem~\ref{thm:weak}, it remains to show that the pair $(A,B)$ has the $T$-$\mu$ PE unique continuation property stated in Hypothesis~\ref{hyp:thm1}.
To this end, fix some $z_0\in L^2(\Omega)$, some $T$-$\mu$ PE-signal $\alpha(\cdot)$, and choose any $\vartheta > T-\mu$. Observe that, since
\begin{equation*}
\int_0^\vartheta \alpha(t) \|B^* e^{tA}z_0\|^2_U\,dt = \int_0^\vartheta \alpha(t) \|d e^{tA}z_0\|^2_H\,dt,
\end{equation*}
then either $\alpha(t)=0$ or $d e^{tA}z_0=0$ for almost every $t\in[0,\vartheta]$. But \eqref{PEcondition} implies that $\alpha(\cdot)>0$ on a set $\Xi \subset (0,\vartheta)$ with $\text{meas}(\Xi)\geq\vartheta-T+\mu>0$ and \eqref{bcond} yields $d(\cdot)\ne 0$ a.\,e. on the open set $\omega\subset\Omega$. Hence,
\begin{equation*}
(t,x)\mapsto (e^{tA}z_0)(x) \equiv 0\quad\text{on}~\Xi \times \omega.
\end{equation*}

Let us now adapt the unique continuation argument proposed in \cite{ReiflerVogt1994} in order to prove that $(t,x)\mapsto (e^{t A}z_0)(x)=0$ on the open set $(0,\vartheta) \times \omega$.
Write the spectrum of $A$ as $(\lambda_k)_{k\in\NN}$ (with eigenvalues repeated according to their multiplicities). Then the sequence $(i\lambda_k)_{k\in\NN}$ is contained in $\RR$ and
is bounded from below.  Denote by $(\phi_k)_{k\in\NN}$ an orthonormal basis of $H$ such that $A\phi_k=\lambda_k \phi_k$.
Fix any $\varphi\in L^2(\omega)$ and consider the function
$F\:t\mapsto\sum_{k\in\NN}e^{\lambda_k t} \langle \phi_k,z_0\rangle \langle \phi_k,\varphi\rangle$.
Notice that $F(t)= \langle e^{t A}z_0,\varphi\rangle$ and that $F$ can be extended from $\RR$ to $\CC^-=\{w\in \CC\mid \mathrm{Im}(w)\leq 0\}$, thanks to the lower boundedness (in $\RR$) of $(i\lambda_k)_{k\in\NN}$.
 Moreover, $F$ is  complex analytic in the interior of $\CC^-$ and continuous up to its boundary.
Since $F$ is zero on a subset of the boundary of $\CC^-$  of positive (one-dimensional) measure, then it follows from Privalov's uniqueness theorem (see \cite[Vol.~II, Theorem 1.9, p. 203]{Zygmund})
that $F$ vanishes identically. By the arbitrariness of $\varphi  \in L^2(\omega)$ it follows, as required, that $e^{t A}z_0$ vanishes on $\omega$ for $t\in (0,\theta)$.

Applying Holmgren's uniqueness theorem (see \cite[Theorem 8.6.8]{Hormander}
and also \cite{Zuazua-remarks-2003}), we
deduce that
$z_0$ vanishes on $\Omega$, proving  Hypothesis~\ref{hyp:thm1}. \hfill$\diamond$
\end{example}

\section{Strong stability}\label{s:strong}
Condition \eqref{PEcondition} means that the feedback control $Bu=-BB^*z$ is, to some
extent, active on \emph{every} interval of the length $T$. From an application point
of view it is also interesting to study the case when there are intervals of arbitrary length
where no feedback control is active, in the spirit of the results in, e.\,g., \cite{HarauxMartinezVancostenoble2005,Hatvani1996,PucciSerrin1996,Smith1961}.
 A natural question is then to ask 
which conditions
imposed on $A$, $B$, and on the distribution and length of these intervals suffice to
ensure stability.

Below we give an abstract result ensuring the strong asymptotic stability of the closed-loop system \eqref{sys} using observability estimates for the open-loop system.
Stressing the importance, in order to apply such result, of having explicit estimates for control costs (i.e., the constants $c$ appearing in inequalities of the type \eqref{ineqneeded}), we then show on several examples how this can lead to stabilizing conditions.

\begin{definition}
We say that $\alpha(\cdot) \in L^\infty([0,T],[0,1])$ is of class $\K(A,B,T,c)$ if
\begin{equation}\label{classK}
 \int_0^T \alpha(t)\|B^* e^{sA}z_0\|_U^2\,dt \geq c\|z_0\|_H^2,\quad \text{for all}~z_0 \in H.
\end{equation}
\end{definition}

With this definition, we can state the following abstract result.

\begin{theorem}\label{thm:StrongStabAbstract}
Suppose that $(a_n,b_n)$, $n\in\NN$, is a sequence of disjoint intervals in $[0,\infty)$,
that $c_n$, $n\in\NN$, is a sequence of positive real numbers and that
$\alpha(\cdot) \in L^\infty([0,\infty),[0,1])$ is such that its restriction $\alpha(a_n+\cdot)|_{[0,b_n-a_n]}$ to
the interval $(a_n,b_n)$ is of class $\K(A,B,b_n-a_n,c_n)$ for all $n \in \NN$. Moreover, assume that
$\sup_{n\in\NN}(b_n-a_n) <\infty$ and $\sum_{n=1}^\infty c_n = \infty$.

Then the mild solution of \eqref{sys} satisfies $\|z(t)\|_H \to 0$ as $t \to \infty$.
\end{theorem}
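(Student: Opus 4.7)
The plan is to iterate the Lyapunov decay estimate from Lemma~\ref{lem:lem0} once per interval $(a_n,b_n)$ and combine it with the monotonicity of $V$ (as in \eqref{Vdecay}) on the complementary time-set. After relabeling I may assume $b_n\le a_{n+1}$. Since $\sum_n c_n=\infty$ forces infinitely many intervals and $D:=\sup_n(b_n-a_n)<\infty$, the ordered endpoints satisfy either $b_n\to\infty$ or $b_n\nearrow t^\star<\infty$. Set $K:=1+D^2\|B\|^4$.

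\textbf{One-step estimate.} Lemma~\ref{lem:lem0} applied to $[a_n,b_n]$ yields
\begin{equation*}
V(z(b_n))-V(z(a_n))\;\le\;-\frac{1}{2K}\int_0^{b_n-a_n}\alpha(a_n+t)\,\|B^*e^{tA}z(a_n)\|_U^2\,dt.
\end{equation*}
The hypothesis $\alpha(a_n+\cdot)|_{[0,b_n-a_n]}\in\K(A,B,b_n-a_n,c_n)$ applied to $z_0=z(a_n)$ bounds the integral from below by $c_n\|z(a_n)\|_H^2=2c_n V(z(a_n))$. Using moreover $V(z(a_{n+1}))\le V(z(b_n))$ from \eqref{Vdecay} applied on $[b_n,a_{n+1}]$, I obtain the key recursion
\begin{equation*}
V(z(a_{n+1}))\;\le\;V(z(b_n))\;\le\;\bigl(1-c_n/K\bigr)\,V(z(a_n)).
\end{equation*}

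\textbf{Passing to the limit.} Two cases. If some $c_{n^\star}\ge K$, the recursion together with $V\ge 0$ forces $V(z(b_{n^\star}))=0$, hence $z(b_{n^\star})=0$ and by uniqueness of mild solutions $z\equiv 0$ on $[b_{n^\star},\infty)$. Otherwise all factors lie in $[0,1)$ and iteration gives
\begin{equation*}
V(z(b_n))\;\le\;V(z_0)\prod_{k=1}^{n}\bigl(1-c_k/K\bigr)\;\le\;V(z_0)\exp\!\Bigl(-K^{-1}\sum_{k=1}^{n} c_k\Bigr)\;\longrightarrow\;0,
\end{equation*}
using $1-x\le e^{-x}$ and $\sum c_k=\infty$. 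Since $V(z(\cdot))$ is non-increasing on all of $[0,\infty)$, the conclusion $V(z(t))\to 0$ as $t\to\infty$ follows immediately when $b_n\to\infty$. In the remaining case $b_n\nearrow t^\star<\infty$, continuity of $z(\cdot)$ in $H$ gives $V(z(t^\star))=\lim_n V(z(b_n))=0$, hence $z\equiv 0$ on $[t^\star,\infty)$ by uniqueness.

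The genuine obstacle is not analytic but bookkeeping: one must (i) absorb the fact that the damping need not be active between consecutive intervals (handled by the global monotonicity of $V$), and (ii) exclude, or else dispatch, accumulation of the intervals at a finite time (handled by continuity of mild solutions). The quantitative heart of the argument is simply the product-estimate $\prod(1-c_k/K)\to 0$ enabled by the uniform upper bound $D$ on the interval lengths, which in turn provides the uniform constant $K$ appearing in the one-step decay.
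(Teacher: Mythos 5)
Your argument is, in its analytic core, exactly the paper's: the one-step decay from Lemma~\ref{lem:lem0} combined with the class-$\K$ hypothesis gives $V(z(b_n))\le(1-c_n/K)V(z(a_n))$, the monotonicity of $V$ bridges the gaps between intervals, and the product estimate with $\sum c_n=\infty$ forces $V(z(a_n))\to 0$; the uniform constant $K$ coming from $\sup_n(b_n-a_n)<\infty$ is used in the same way (the paper keeps $K_n=1+(b_n-a_n)^2\|B\|^4$ per interval and uniformizes only at the end, which is equivalent). Your separate treatment of the case $c_{n^\star}\ge K$ is harmless and in fact slightly more careful than the paper's direct use of $\log(1-x)\le -x$.

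The one genuine gap is the very first step: ``after relabeling I may assume $b_n\le a_{n+1}$.'' For an arbitrary countable family of pairwise disjoint intervals this is false --- the family may have no increasing enumeration at all (e.g.\ a sub-family with dense order type inside $[0,1]$, obtained by a Cantor-like construction, together with the intervals $(n,n+1)$ for $n\ge 2$; there is no first interval and no immediate successors, so no ordering with $b_n\le a_{n+1}$ exists). This is precisely the point the paper spends its opening paragraph on. The repair requires an observation you do not make: from \eqref{classK} one always has $c_n\le (b_n-a_n)\|B^*\|^2$ (test the definition against the trivial upper bound for the integral), so by disjointness the $c_n$ attached to intervals lying in any bounded region sum to a finite amount. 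One can therefore discard all but finitely many intervals from each bounded block, keeping $\sum c_n=\infty$, and the retained locally finite family \emph{does} admit an increasing enumeration. Once this is done your dichotomy ``$b_n\to\infty$ or $b_n\nearrow t^\star$'' and the rest of the proof go through (indeed the finite-accumulation branch becomes vacuous, though your continuity argument disposes of it correctly in any case). Without this extraction step, the recursion $V(z(a_{n+1}))\le V(z(b_n))$ cannot even be set up, so you should supply it.
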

\begin{proof}
First of all notice that \eqref{classK} implies that $c\leq T \|B^*\|^2$. Hence, the sum of the $c_n$ corresponding to intervals $(a_n,b_n)$ contained in a given bounded interval $[\tau_0,\tau_1]$ is finite and can be approximated arbitrarily well by the sum of finitely many of such $c_n$. Therefore, we can  extract a locally finite subsequence of intervals, still denoted by $(a_n,b_n)$, $n\in\NN$, such that
$\sum_{n=1}^\infty c_n=\infty$ and, up to a reordering,
$b_n\leq a_{n+1}$ for all $n\in\NN$.

Using the energy inequality \eqref{Vdecay} we get $V(z(a_{n+1})) \leq V(z(b_n))$ while Lemma~\ref{lem:lem0}, with $a=a_n$ and
$b=b_n$, implies that
\begin{equation*}
V(z(b_n))-V(z(a_n)) \leq-\frac{1}{2(1+(b_n-a_n)^2
 \|B\|^4)}\int_0^{b_n-a_n}\alpha(a_n+t)\|B^* e^{tA}z(a_n)\|_U^2\,dt.
\end{equation*}
Thus, since $\alpha(a_n+\cdot)|_{[0,b_n-a_n]}$ is of class $\K(A,B,b_n-a_n,c_n)$,
we have
\begin{equation}\label{eq:Vestn}
V(z(a_{n+1}))-V(z(a_n)) \leq -\frac{c_n}{1+(b_n-a_n)^2 \|B\|^4}V(z(a_n)).
\end{equation}

Using the estimate \eqref{eq:Vestn} recursively, we obtain
\begin{equation*}
V(z(a_{n+1})) \leq \prod_{j=1}^{n} \left(1-\frac{c_j}{1+(b_j-a_j)^2
 \|B\|^4}\right) V(z_0).
\end{equation*}
Since
\begin{align*}
\log \prod_{j=1}^{\infty} \left(1-\frac{c_j}{1+(b_j-a_j)^2  \|B\|^4}\right)
&=\sum_{j=1}^{\infty} \log\left(1-\frac{c_j}{1+(b_j-a_j)^2  \|B\|^4}\right)\\
&\leq -\sum_{j=1}^{\infty}\frac{c_j}{1+(b_j-a_j)^2  \|B\|^4}\\
&\leq -\frac{1}{1+ \|B\|^4\sup_{j=1}^\infty(b_j-a_j)^2 }\sum_{j=1}^{\infty}c_j=-\infty,
\end{align*}
then $V(z(a_{n+1}))$ tends to zero as $n$ goes to infinity.
\end{proof}

As a direct application of this abstract result, we consider again the Schr\"odinger equation
in one space dimension.

\begin{example}\label{EXAMP5.1}
(Schr\"odinger equation)
Consider
\begin{align}
 	i y_t(t,x) + y_{xx}(t,x) + i \alpha(t)d(x)^2y(t,x)& = 0,
 	\quad&(t,x)\in(0,\infty)\times(0,1),\label{schroedinger3PDE_1D1}\\
 	y(t,0)=y(t,1)&=0	,					\quad& t\in (0,\infty),\label{schroedinger3PDE_1D2}\\
 	y(0,x) &= y_0(x),					\quad& t\in(0,1),	\label{schroedinger3PDE_1D3}
\end{align}
with $d(\cdot) \in L^\infty(0,1)$ and $\alpha(\cdot) \in L^\infty([0,\infty),[0,1])$. Assume that
$d=\chi_\omega$ with $\omega$ a nonempty subinterval of $(0,1)$ and assume that
$(a_n,b_n)$, $n\in\NN$, is a sequence of disjoint intervals in $[0,\infty)$ such that
$\sup_{n\in\NN}(b_n-a_n)<\infty$ and $\alpha(\cdot)|_{(a_n,b_n)}
\equiv 1$.

As in Example~\ref{exp_Sch_1D}, we write system
\eqref{schroedinger3PDE_1D1}--\eqref{schroedinger3PDE_1D3} in the form
\eqref{sys} with $H=U=L^2(0,1)$, the skew-adjoint operator $A$ given by
$Az=iz_{xx}$ acting on $D(A)=\H^2(0,1)\cap\H^1_0(0,1)$, and the
multiplication operator $B\:z\mapsto \chi_\omega z$, so that for $y_0 \in H$, the mild
solution $z(\cdot)$ of \eqref{sys} with this choice of $A,B$ corresponds to the
weak solution $y(\cdot)$ of \eqref{schroedinger3PDE_1D1}--\eqref{schroedinger3PDE_1D3}.

It is well known that for any interval $(a_n,b_n)$, $n\in\NN$, there exists a
positive constant $c_n$ such that
\begin{equation}\label{eq:obsineq_n}
\int_{a_n}^{b_n} \int_{\omega} |e^{tA}z(x)|^2\,dx\,dt \geq c_n\|z\|_H^2,~\quad
z\in H,
\end{equation}
that is, $\alpha(a_n+\cdot)|_{[0,b_n-a_n]}$ is of class
$\K(A,B,b_n-a_n,c_n)$
(see, for instance, \cite[Remark~6.5.4]{TucsnakWeiss2009}). Moreover, rewriting
\eqref{eq:obsineq_n} as
\begin{equation*}
\int_{a_n}^{b_n} \int_{\omega} \left|\sum_{k \in \NN} \la\phi_k,z\ra_{L^2(0,1)}
\phi_k(x) e^{i n^2 \pi^2 t} \right|^2\,dx\,dt \geq c_n\|z\|_H^2,
\end{equation*}
with $\phi_k(x)=\sqrt{2}\sin(n \pi x)$ we get from
\cite[Corollary~3.2]{TenenbaumTucsnak2007} that $c_n$ can be taken satisfying
\begin{equation*}
c_n \geq C e^{-\frac{2}{b_n-a_n}}
\end{equation*}
for
some positive constant $C$ independent of $n$.

Hence,
Theorem~\ref{thm:StrongStabAbstract} guarantees that the mild solution
of \eqref{sys} converges strongly to the origin in $H$ if
$$
\sum_{n=1}^\infty e^{-\frac{2}{b_n-a_n}}= \infty.
$$
\hfill$\diamond$
\end{example}

\begin{remark}
The results in the above example and, more generally, the methodology employed in this section, 
can be adapted to the case of some unbounded control operators and thus to boundary stabilization problems.
As an example, consider the system
\begin{align}
 	i y_t(t,x) + y_{xx}(t,x) & = 0,
 	\quad&(t,x)\in(0,\infty)\times(0,1),\label{schroedinger3PDE_1D1_BD}\\
 y_x(t,0)&=-i\alpha(t){y(t,0)}	,					\quad& t\in (0,\infty),\label{schroedinger3PDE_1D2_BD_BIS}\\
 	y(t,1)&=0	,					\quad& t\in (0,\infty),\label{schroedinger3PDE_1D2_BD}\\
 	y(0,x) &= y_0(x),					\quad& x\in(0,1).	\label{schroedinger3PDE_1D3_BD}
\end{align}
with a piecewise constant $\alpha\: [0,\infty) \to [0,1]$ satisfying conditions as in
Example~\ref{EXAMP5.1} for some sequences $(a_n)_{n\in\NN}$, $(b_n)_{n\in\NN}$.
Clearly, Theorem~\ref{thm:StrongStabAbstract} does not apply in this case. 
However, we can retrieve similar 
conditions on the intervals $(a_n,b_n)$
in order to have the strong stability property as in Example~\ref{EXAMP5.1}. 
Indeed, it suffices to check  the exact observability for the undamped dynamics 
and to show that an energy estimate such in Lemma~\ref{lem:lem0} holds for the constant damping case.
The operator $A\: D(A)\to L^2(0,1)$ corresponding to the undamped case (i.e., $\alpha=0$ 
in \eqref{schroedinger3PDE_1D2_BD_BIS}) is
$$
D(A)=\{\varphi\in H^2(0,1) \mid \varphi_x(0)=0,\ \varphi(1)=0\},
$$
$$
A\varphi=i\varphi_{xx} \qquad(\varphi\in D(A)),
$$
whereas the control operator is given by $B=\delta_0$, where $\delta_0$ is the Dirac mass concentrated at the origin.

Using the results in \cite{TenenbaumTucsnak2007}, it is not difficult to check that for $\alpha=0$ there exist $C_1,\ C_2>0$ such that 
$$
C_1 e^{\frac{C_2}{T}}\int_0^T |y(t,0)|^2\, {\rm d}t\geqslant \|y_0\|_{L^2(0,1)}^2 \quad(T>0,\ y_0\in D(A)).
$$
The last formula is, according to the above definitions of $A$ and $B$, equivalent to the inequality 
\begin{equation}\label{classK_BIS}
 C_1 e^{\frac{C_2}{T}} \int_0^T |B^* e^{sA}y_0|\,dt \geq c\|y_0\|_{L^2(0,1)} \qquad (y_0 \in D(A)),
\end{equation}
so that we have indeed the exact observability in any time $T>0$ for the undamped dynamics. 

To check an energy estimate similar to the one in Lemma~\ref{lem:lem0}, one can first prove \eqref{Vdecay} for $y_0$ 
in the domain of the generator (which is done via integration by parts). One can then check (using, for instance, 
a transfer function like in Guo and Shao \cite{GuoShao}) that the system $(A,B,B^*)$ is well-posed 
in the sense of Salamon and Weiss (see \cite{Weiss10}). \hfill$\diamond$
\end{remark}

Sufficient conditions for strong stability as those obtained in 
Theorem~\ref{thm:StrongStabAbstract} can be specified 
more precisely in the case of 
integral ``excitations''.

\begin{theorem}\label{thm:c(T)}
{
Suppose that there exist constants $\rho,T_0>0$ and a positive, continuous
function $c\: (0,\infty) \to \RR$ such that for all $T\in(0,T_0]$, if for
some $\tilde\alpha \in L^\infty([0,T],[0,1])$
\begin{equation*}
 \int_0^T \tilde{\alpha}(t)\,dt \geq \rho T
\end{equation*}
then 
$\tilde{\alpha}(\cdot)$ is of class $\K(A,B,T,c(T))$.
Let $(a_n,b_n)$, $n\in\NN$, be  a sequence of disjoint intervals in $[0,\infty)$
and $\alpha \in L^\infty([0,\infty),[0,1])$. Assume that
$\int_{a_n}^{b_n} \alpha(t)\,dt \geq
\rho (b_n-a_n)$ and $\sum_{n=1}^\infty c(b_n-a_n)=\infty$.
Then the mild solution of \eqref{sys} satisfies $\|z(t)\|_H \to 0$ as $t \to
\infty$.}
\end{theorem}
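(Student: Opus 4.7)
The plan is to reduce the hypotheses to those of Theorem~\ref{thm:StrongStabAbstract} by extracting, from the given family $(a_n,b_n)_{n\in\NN}$, a new sequence of disjoint sub-intervals of uniformly bounded length on each of which the $\K$-class property holds, and whose associated constants still sum to infinity.

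First, for every $n$ with $b_n-a_n\le T_0$ I keep the interval unchanged: the integral hypothesis $\int_{a_n}^{b_n}\alpha(t)\,dt\ge \rho(b_n-a_n)$, combined with the standing assumption of the theorem, yields directly that $\alpha(a_n+\cdot)|_{[0,b_n-a_n]}$ belongs to $\K(A,B,b_n-a_n,c(b_n-a_n))$. For every $n$ with $b_n-a_n>T_0$ I partition $(a_n,b_n)$ into $k_n:=\lceil (b_n-a_n)/T_0\rceil$ equal sub-intervals of length $T_n':=(b_n-a_n)/k_n$, which lies in $(T_0/2,T_0]$ (the upper bound is built into $k_n$, and the lower bound follows from $k_n-1<(b_n-a_n)/T_0$ together with $b_n-a_n>T_0$). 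Since the average of $\alpha$ on $(a_n,b_n)$ is at least $\rho$, a pigeonhole argument supplies at least one sub-interval $(a_n',b_n')$ with $\int_{a_n'}^{b_n'}\alpha(t)\,dt\ge \rho T_n'$; for that sub-interval $\alpha(a_n'+\cdot)|_{[0,T_n']}$ is of class $\K(A,B,T_n',c(T_n'))$. I discard the other sub-intervals arising from the partition.

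The resulting sequence $(\tilde a_m,\tilde b_m)_{m\in\NN}$ is disjoint (its members are sub-intervals of the original disjoint family), the lengths $\tilde T_m$ are bounded by $T_0$, and the associated constants $\tilde c_m:=c(\tilde T_m)$ are positive. To verify $\sum_m \tilde c_m=\infty$ I distinguish two cases. If infinitely many original intervals have length exceeding $T_0$, then the corresponding selected sub-intervals all have length in $[T_0/2,T_0]$; by continuity and positivity of $c$ on this compact set, the minimum $m_0:=\min_{[T_0/2,T_0]}c$ is strictly positive, and the sum contains infinitely many terms each bounded below by $m_0$, hence diverges. Otherwise only finitely many original intervals are longer than $T_0$, so those terms contribute a finite amount to the divergent series $\sum_n c(b_n-a_n)$; the contribution of the ``short'' intervals must then diverge, and on those intervals $\tilde T_m=b_n-a_n$ so the $\tilde c_m$ agree with $c(b_n-a_n)$, giving divergence again.

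Having verified all the hypotheses, an application of Theorem~\ref{thm:StrongStabAbstract} to the family $(\tilde a_m,\tilde b_m)$ concludes that $\|z(t)\|_H\to 0$ as $t\to\infty$. The only subtle step is the second one: the hypothesis $\sum c(b_n-a_n)=\infty$ gives no a priori control on the behavior of $c$ at lengths in $(0,T_0]$, so the case distinction between infinitely and finitely many ``long'' intervals is genuinely needed, together with the local positive lower bound for $c$ on $[T_0/2,T_0]$ to guarantee that the subdivision does not kill the divergence of the series.
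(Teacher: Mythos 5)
Your proof is correct and follows essentially the same route as the paper: keep the short intervals, subdivide each long interval into equal pieces of length in $[T_0/2,T_0]$, select by pigeonhole a piece on which $\alpha$ still has average at least $\rho$, and invoke Theorem~\ref{thm:StrongStabAbstract} together with the positive lower bound $\min_{[T_0/2,T_0]}c>0$. Your explicit case distinction (finitely versus infinitely many long intervals) is in fact slightly more careful than the paper's, which only treats the cases $\sup_n(b_n-a_n)\le T_0$ and ``infinitely many long intervals,'' but the substance is identical.
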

\begin{proof}\mbox{}
In the case where $\sup_{n\in\NN}(b_n-a_n) \leq T_0$ the conclusion follows directly from
Theorem~\ref{thm:StrongStabAbstract}.

Now assume  that for infinitely many $n\in\NN$, $b_n-a_n>T_0$.
{
Let $n$ be
such that
$b_n-a_n>T_0$
and split $I_n=(a_n,b_n)$ into finitely many
pairwise disjoint subintervals $I_n^1,\dots,I_n^r$ of common length $l_n\in[T_0/2,T_0]$.
Since $\sum_{j=1}^r \int_{I_n^j}\alpha(t)\,dt\geq \rho (b_n-a_n)=
r\rho  \,l_n$, then there exists $j\in\{1,\dots,r\}$ such that $\int_{I_n^j}\alpha(t)\,dt\geq
\rho l_n=\rho |I_n^j|$.
Denote $I_n^j$ by $(a_n',b_n')$.
If $n$ is such that $b_n-a_n\leq T_0$, set  $a_n'=a_n$ and $b_n'=b_n$.
}

Again applying  Theorem~\ref{thm:StrongStabAbstract} to the sequence of intervals $(a_n',b_n')$, $n\in\NN$, we
can conclude by showing that
$\sum_{n=1}^{\infty}c(b_n'-a_n')=\infty$, since $\sup_{n\in\NN} (b_n'-a_n')<\infty$.
The unboundedness of $\sum_{n=1}^{\infty}c(b_n'-a_n')$ follows from the remark that, for infinitely many $n\in\NN$,
$c(b_n'-a_n')\geq \min_{T\in[T_0/2,T_0]} c(T)>0$.
\end{proof}

 \begin{example} (Wave equation) 
 %
Let $t\mapsto z(t)=(v(t,\cdot),v_t(t,\cdot))$ be a solution of the wave equation
\eqref{wellengleichung_2}--\eqref{RB_2}
where, as in Example~\ref{ex_wave_2}, $\Omega$ is a bounded domain in $\RR^N$,
$N\geq 1$, $d\in L^\infty(\Omega)$
satisfies
\begin{equation}\label{example-stability-3}
|d(x)|\geq  d_0>0\quad \mbox{for almost all}~x\in \Omega.
\end{equation}
Consider $T,\rho>0$ and some  $\alpha(\cdot)\in L^\infty([0,T],[0,1])$
satisfying
\begin{equation}\label{example-alpha-cond-3}
\int_0^T \alpha(t)\,dt \geq T \rho.
\end{equation}

Using the same notation as in Example~\ref{ex_wave_2} and, in particular,
fixing an initial condition and defining the set $A_n^\epsilon$ as in \eqref{eq:AepsDef}, we have, according to \eqref{meas--},
\begin{equation*}
\text{meas}(A_n^\epsilon) \geq
T\left(1-\frac{2\epsilon}\pi\right)-2\frac\epsilon{\min_{n\in\NN}\lambda_n}
\end{equation*}
for any $\epsilon \in (0,1)$. Without loss of generality, we can assume that $\min_{n\in\NN}\lambda_n=\lambda_1$.
For $T$ small enough, choosing
$\bar{\epsilon}=\frac{\rho\lambda_1}{6} T$, we get $\text{meas}(A_n^\epsilon) \geq T\left(1- \frac {\rho}{2}\right)$, leading to
\begin{equation*}
\int_{A_n^{\bar\epsilon}} \alpha(t)\,dt \geq \frac {T\rho}{2},
\end{equation*}
because of \eqref{example-alpha-cond-3}.
The definition of $A_n^{\bar \epsilon}$ yields the observability estimate
\begin{align*}
\int_0^T \alpha(t)( -a_n\sin({ \sqrt{\lambda_n} t}) + b_n\cos({\sqrt{\lambda_n}
t}))^2\,dt&\geq {\bar\epsilon}^2\, (a_n^2+b_n^2)\int_{A_n^{\bar\epsilon}}
\alpha(t)\,dt\\ &\geq \frac{\rho^3 \lambda_1^2}{72} T^3 (a_n^2+b_n^2).
\end{align*}
Reasoning as in Example~\ref{ex_wave_2},
we obtain that the function $c(T)$ appearing in the statement of
Theorem~\ref{thm:c(T)} for the system
\eqref{wellengleichung_2}--\eqref{RB_2} with uniform damping
\eqref{example-stability-3} can be chosen of order $T^3$ for $T$ small.

In particular, Theorem~\ref{thm:c(T)} states 
that
a sufficient condition for the
strong asymptotic stability of the solutions
of \eqref{wellengleichung_2}--\eqref{RB_2} with uniform damping
\eqref{example-stability-3} is that
$\alpha(\cdot)\in L^\infty([0,\infty),[0,1])$
satisfies 
\begin{equation*}
\int_{a_n}^{b_n} \alpha(t)\,dt \geq \rho (b_n-a_n),\quad n\in\NN,
\end{equation*}
for some
positive constant $\rho$ and some sequence $(a_n,b_n)$, $n\in\NN$, of disjoint intervals in
$[0,\infty)$ such that
\begin{equation*}
\sum_{n=1}^\infty (b_n-a_n)^3=\infty.
\end{equation*}

{
For the particular case of the wave equation, the sufficient
condition obtained here weakens the one considered in
\cite{HarauxMartinezVancostenoble2005} where $\alpha(\cdot)$ is bounded away
from 0 by a constant $m_n$ on each interval $(a_n,b_n)$ in order to guarantee
asymptotic stability (cf. also Remark~\ref{rem:haraux}).
}

\hfill$\diamond$
\end{example}

\begin{example}\mbox{}
 ({
 Finite-dimensional linear systems})
Let us characterize the function $c(\cdot)$ appearing in the statement of Theorem~\ref{thm:c(T)} in the case of finite-dimensional systems, that is, when  $\dim (H)<\infty$.
We prove below that $T$ behaves polynomially and that its degree for $T$ small
can be taken
equal to the sharp estimate computed by Seidman in the case $\alpha\equiv 1$ (see \cite{Seidman1988}).

In the finite-dimensional case, the assumption that $A$ generates a strongly continuous contraction semigroup
is standardly weakened 
into the requirement that  $A$ is neutrally stable, that is, its eigenvalues are
of non-positive real part and all Jordan blocks corresponding  to pure imaginary eigenvalues are trivial.

Clearly, a necessary condition for ensuring the convergence to the origin of all trajectories of the system $\dot x=Ax+\alpha B u$ for some  $\alpha=\alpha(t)\in[0,1]$ is that the pair $(A,B)$ is stabilizable. We will make this assumption from now on.

 Up to a linear change of
variables, $A$ and $B$ can be written as
$$
A=\begin{pmatrix}
A_1&A_2\\
0&A_3
\end{pmatrix}, \ \ \
B=\begin{pmatrix} B_1\\B_3
\end{pmatrix},
$$
where $A_1$ is Hurwitz and all the eigenvalues of $A_3$
are purely imaginary.
 From the neutral stability assumption and up to a further linear change of
coordinates, we may assume that $A_3$ is skew-symmetric.
 From the stabilizability  assumption on $(A,B)$, moreover, we deduce that $(A_3,B_3)$ is
controllable.

Setting $x=(x_1,x_3)$ according to the above decomposition,
the system $\dot x=Ax+\alpha B u$ can be written as
\begin{align}
\dot x_1&=A_1x_1+A_2x_3+\alpha(t)B_1u,\label{h1}\\
\dot x_3&=A_3x_3+\alpha(t)B_3u.\label{h2}
\end{align}

 Assume that, for a given $\alpha(\cdot)$, all solutions of \eqref{h2} with $u=-B_3^\top x_3$
 converge to the origin.
Then all trajectories of system \eqref{h1}-\eqref{h2}, with the  choice of feedback $u=-B_3^\top x_3$,
converge to the origin, since \eqref{h1} becomes
an autonomous linear Hurwitz
system subject to a perturbation
whose norm converges to zero as time goes to infinity.

The previous discussion allows us to focus on the special case where
$A$ is skew-symmetric and $(A,B)$ is controllable.

Denote by
$K_{(A,B)}$ the minimal non-negative integer such that
\begin{equation}\label{K-alman}
\mathrm{rank}[B,AB,\dots,A^{K_{(A,B)}} B]=N, 
\end{equation}
 where $N$ is the dimension of $H$.
We have the following result.

\begin{proposition}\label{prop:finite}
Let $A$ be skew-symmetric and $(A,B)$ controllable.
Then for every $\rho>0$ there exists $\kappa>0$ such that, for every $T\in (0,1]$ and
every $\alpha\in L^\infty([0,T],[0,1])$, if $\int_0^T\alpha(t)dt\geq \rho T$ then
$\alpha$ is of class $\K(A,B,T,\kappa T^{2K_{(A,B)}+1})$.
 \end{proposition}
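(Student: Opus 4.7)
The approach combines a Chebyshev-type reduction on $\alpha$, a Taylor expansion of $e^{tA}z_0$ about the origin, the Kalman rank identity \eqref{K-alman}, and a vector-valued Remez--Tur\'an lower bound for polynomials restricted to measurable subsets of $[0,1]$.

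First I would pass to a level set. Set $E_\alpha = \{t \in [0,T] : \alpha(t) \geq \rho/2\}$. Using $\alpha(t) \le 1$ together with $\int_0^T \alpha(t)\,dt \geq \rho T$, a Markov-type estimate gives $|E_\alpha| \geq \rho T/2$, so
\[
\int_0^T \alpha(t)\|B^* e^{tA} z_0\|_U^2\,dt \;\geq\; \tfrac{\rho}{2} \int_{E_\alpha}\|B^* e^{tA}z_0\|_U^2\,dt,
\]
and it suffices to produce a lower bound of the form $\int_E \|B^* e^{tA} z_0\|_U^2\,dt \gtrsim T^{2K+1}\|z_0\|_H^2$ for any measurable $E \subset [0,T]$ with $|E| \geq (\rho/2)\,T$, where $K = K_{(A,B)}$.

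For $T \in (0,T^*]$, with $T^* \in (0,1]$ to be chosen, I would split $B^* e^{tA}z_0 = P(t) + R(t)$ with $P(t)=\sum_{k=0}^{K} (t^k/k!)\,B^* A^k z_0$ and the (finite-dimensional) Taylor remainder bound $\|R(t)\|_U \leq C_{A,B}\,t^{K+1}\|z_0\|_H$ on $[0,1]$. The rank identity \eqref{K-alman}, combined with skew-symmetry ($A^*=-A$), makes the map $z \mapsto (B^* A^k z)_{k=0}^K$ injective on $H$, yielding a constant $c_K > 0$ with $\sum_{k=0}^K \|B^* A^k z_0\|_U^2 \geq c_K \|z_0\|_H^2$. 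The core auxiliary estimate is a Remez--Tur\'an inequality for Hilbert-valued polynomials: for every $\sigma \in (0,1)$ there is $\gamma(\sigma,K)>0$ such that
\[
\int_F \Bigl\|\sum_{k=0}^K s^k \eta_k \Bigr\|^2\,ds \;\geq\; \gamma(\sigma, K)\sum_{k=0}^K \|\eta_k\|^2
\]
for every measurable $F \subset [0,1]$ with $|F|\geq\sigma$ and every $\eta_0,\dots,\eta_K$ in a Hilbert space. I would prove this by contradiction/compactness: if it failed, extract $F_n$ with $\chi_{F_n}\weakstarto h$, $\int h \geq \sigma$, and unit coefficient vectors $c_n\to c \in \RR^{K+1}$ such that the polynomial $\sum c_k s^k$ vanishes on the positive-measure set $\{h>0\}$, hence identically, contradicting $\|c\|=1$. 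Rescaling $s=t/T$ and applying this to $\eta_k = (T^k/k!)\,B^* A^k z_0$ on $F=E/T$, and using $T \leq 1$, I obtain
\[
\int_E \|P(t)\|_U^2\,dt \;\geq\; \frac{\gamma\, c_K}{(K!)^2}\,T^{2K+1}\|z_0\|_H^2,
\]
while $\int_E \|R(t)\|_U^2\,dt = O(T^{2K+3}\|z_0\|_H^2)$; the elementary bound $\|P+R\|^2 \geq \tfrac{1}{2}\|P\|^2 - \|R\|^2$ then gives $\int_E \|B^* e^{tA}z_0\|_U^2\,dt \geq \kappa^*\,T^{2K+1}\|z_0\|_H^2$ for all $T \leq T^*$, provided $T^*$ is chosen small enough to absorb the $O(T^{2})$ remainder into the leading term.

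For $T \in (T^*, 1]$ I would use skew-symmetry of $A$: $\|e^{sA}z_0\|_H = \|z_0\|_H$ and $B^* e^{tA}(e^{sA}z_0) = B^* e^{(t+s)A}z_0$. Partition $[0,T]$ into $\lceil T/T^*\rceil$ intervals of length $\leq T^*$; since $|E_\alpha| \geq \rho T/2$, pigeonhole gives a subinterval $[s, s+T^*]$ with $|E_\alpha \cap [s,s+T^*]| \geq \rho T^*/4$. Translating by $s$ and applying the small-$T$ estimate to $z_0' := e^{sA}z_0$ yields a bound of the form $c\,\|z_0\|_H^2 \geq c\,T^{2K+1}\|z_0\|_H^2$ (using $T\leq 1$), completing the proof. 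The main technical obstacle is the Remez--Tur\'an step: while the scalar version is classical, the uniform vector-valued form over all measurable $F$ of prescribed measure is what delivers the correct $T$-power, and the compactness argument sketched above is exactly what is needed, since for the statement only the existence of $\gamma > 0$ (and not an explicit dependence on $\sigma$ and $K$) is required.
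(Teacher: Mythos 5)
Your argument is correct, but it is organized quite differently from the paper's. The paper proves the proposition by a single global contradiction/compactness argument: it assumes the constant $\kappa_n$ degenerates along a sequence $(T_n,z_0^n,\alpha^n)$, rescales time to $[0,1]$ via $\beta^n(t)=\alpha^n(T_nt)$, extracts weak-$*$ and norm limits, and then splits into the case $T_\infty>0$ (where analyticity of $t\mapsto\sum_i(b_i^\top e^{tT_\infty A}z_0^\infty)^2$ kills the limit function on a positive-measure set and hence everywhere, contradicting controllability) and the case $T_\infty=0$ (where the normalized Taylor-coefficient vector $C^n/\|C^n\|$ converges to a nonzero limit whose associated polynomial would have to vanish on a positive-measure set). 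You instead make the argument modular and essentially quantitative: a level-set (Chebyshev) reduction to sets $E$ of measure $\geq\rho T/2$, an explicit Taylor expansion with the remainder absorbed for $T\leq T^*$, a vector-valued Remez--Tur\'an lemma (which is exactly the "polynomial of bounded degree cannot vanish on a set of positive measure" compactness step, isolated as a standalone inequality), and, for $T\in(T^*,1]$, a pigeonhole plus the unitarity of $e^{sA}$ in place of the paper's analyticity case. Both routes rest on the same underlying compactness fact; yours makes the origin of the exponent $2K+1$ and the constants more transparent and tracks them semi-explicitly, while the paper's is shorter because it never has to separate the small-$T$ and large-$T$ regimes by hand. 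Two small points to tighten: (i) your Remez--Tur\'an lemma is stated for coefficients in an arbitrary Hilbert space but your compactness sketch normalizes a coefficient vector in $\RR^{K+1}$; either restrict to $U=\RR^r$ (which suffices here, normalizing in $\RR^{r(K+1)}$ as the paper does with $C^n$) or argue through the $(K+1)\times(K+1)$ Gram matrix of the coefficients, whose normalized limits again yield a nonnegative degree-$2K$ polynomial vanishing on a positive-measure set; (ii) your use of the Kalman identity \eqref{K-alman} to get $\sum_{k=0}^K\|B^*A^kz_0\|_U^2\geq c_K\|z_0\|_H^2$ does need the skew-symmetry of $A$ to pass from controllability of $(A,B)$ to injectivity of $z\mapsto(B^*A^kz)_{k=0}^K$, which you correctly flag.
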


 \begin{proof}
Let $K=K_{(A,B)}$ and fix $\rho>0$. We should prove that, for some $\kappa>0$, given any $z_0\in \RR^n$ and any $\alpha\in L^\infty([0,T],[0,1])$ such that $T\in (0,1]$ and $\int_0^T\alpha(t)dt\geq \rho T$, we have
$$\int_0^T \alpha(t)\|B^\top e^{t A} z_0\|^2dt\geq \kappa T^{2K+1} \|z_0\|^2.$$

Denote by $b_1,\dots,b_r$ the columns of $B$ and assume, by contradiction, that there exist $(T_n)_{n\in \NN}\subset (0,1]$, $(z_0^n)_{n\in \NN}\subset \RR^{N}$ with $\|z_0^n\|= 1$, and $(\alpha^n)_{n\in\NN}\subset L^\infty([0,1],[0,1])$ with $\int_0^{T_n}\alpha^n(t)dt\geq \rho T_n$ such that $\lim_{n\to\infty}\kappa_n=0$ where
 $$\kappa_n=\frac{\int_0^{T_n}\alpha^n(t)\sum_{i=1}^r(b_i^\top e^{t A} z_0^n)^2 dt}{T_n^{2K+1}},\quad n\in\NN.$$

Let $\beta^n(t)=\alpha^n(T_n t)$ for $n\in\NN$ and $t\in[0,1]$.
Then $\int_0^1 \beta^n(t)dt\geq \rho$  and
$$\kappa_n=\frac{\int_0^{1}\beta^n(t)\sum_{i=1}^r(b_i^\top e^{t T_n A} z_0^n)^2 dt}{T_n^{2K}},\quad n\in\NN.$$

By compactness, up to extracting a subsequence, $T_n\to T_\infty$ in $[0,1]$, $z_0^n\to z_0^\infty$ in 
$\RR^{N}$ and $\beta^n\weakstarto \beta^\infty$ in $L^\infty([0,1],[0,1])$.
In particular, $\|z_0^\infty\|=1$, $\int_0^1 \beta^\infty(t)dt\geq \rho$ and
$$
\lim_{n\to \infty}\kappa_n T_n^{2K}=\int_0^{1}\beta^\infty(t) \sum_{i=1}^r\left(b_i^\top e^{t T_\infty A} z_0^\infty\right)^2 dt=0.
$$

Assume first that  $T_\infty>0$. Then the analytic function $t\mapsto \sum_{i=1}^r\left(b_i^\top e^{t T_\infty A} z_0^\infty\right)^2$ annihilates on  the support of $\beta^\infty$, which has positive measure, and is thus identically equal to zero, contradicting the controllability of the pair $(A,B)$.

Let then $T_\infty=0$.
Rewrite $\kappa_n$ as
$$\kappa_n=\int_0^1 \beta^n(t)\sum_{i=1}^r\left( c_0^{i,n}+t c_1^{i,n}+\cdots+t^K c_K^{i,n}+r^{i,n}(t)\right)^2 dt,$$
where
$$c_j^{i,n}=\frac{b_i^\top A^j z_0^n}{j!T_n^{K-j}},\qquad \|r^{i,n}\|_{L^\infty(0,1)}\leq M T_n,$$
 for some $M>0$ only depending on $A$, $B$, and $K$.
Define the vector
$$C^n=(c_0^{1,n},\dots,c_K^{1,n},c_0^{2,n},\dots,c_K^{2,n},\dots,c_0^{r,n},\dots,c_K^{r,n})$$
belonging to $\RR^{r(K+1)}$.
Since $\|z_0^n\|=1$, $T_n\leq 1$, and because of \eqref{K-alman},  there exists $\nu>0$ only depending on $A$ and $B$ such that
$\|C^n\|\geq \nu$.
Thus,
$$\kappa_n\geq \nu^2 \frac{\int_0^1 \beta^n(t)\sum_{i=1}^r\left( c_0^{i,n}+t c_1^{i,n}+\cdots+t^K c_K^{i,n}+r^{i,n}(t)\right)^2 dt}{\|C^n\|^2}.$$
Up to extracting a subsequence, $C^n/\|C^n\|$ converges in the unit sphere of $\RR^{r(K+1)}$. Denote its limit by $(\gamma_0^{1},\dots,\gamma_K^{r})$. Then
$$t\mapsto \sum_{i=1}^r\left( c_0^{i,n}+t c_1^{i,n}+\cdots+t^K c_K^{i,n}+r^{i,n}(t)\right)^2$$ converges uniformly on $[0,1]$ to
$t\mapsto\sum_{i=1}^r\left( \gamma_0^{i}+t \gamma_1^{i}+\cdots+t^K \gamma_K^{i}\right)^2$. We can conclude that
$$\int_0^1 \beta^\infty(t)\sum_{i=1}^r\left( \gamma_0^{i}+t \gamma_1^{i}+\cdots+t^K \gamma_K^{i}\right)^2dt=0,$$
leading to a contradiction, since $\beta^\infty$ is nonzero on a subset of $[0,1]$ of positive measure and $(\gamma_0^{1},\dots,\gamma_K^{r})$ is a nonzero vector.
\end{proof}

Proposition~\ref{prop:finite} and Theorem~\ref{thm:c(T)} imply the following.

\begin{corollary}
Let $A$ be skew-symmetric and $(A,B)$ be controllable.
Then for every $\rho>0$,
every $\alpha\in L^\infty([0,\infty),[0,1])$ such that there exist a sequence
$(a_n,b_n)$, $n\in\NN$,
of disjoint intervals in $[0,\infty)$
with 
$\int_{a_n}^{b_n} \alpha(t)\,dt \geq
\rho (b_n-a_n)$ and $\sum_{n=1}^\infty (b_n-a_n)^{2K_{(A,B)}+1}=\infty$, and every solution $z(\cdot)$ of \eqref{sys} corresponding to $\alpha$,
we have $\|z(t)\|_{\RR^N} \to 0$ as $t \to \infty$.
\end{corollary}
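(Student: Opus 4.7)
The plan is to derive the corollary as a direct combination of Proposition~\ref{prop:finite} with Theorem~\ref{thm:c(T)}. The hypotheses of the corollary match exactly the standing assumptions of both results: $A$ is skew-symmetric (hence in particular generates a contraction semigroup on $\RR^N$) and $(A,B)$ is controllable, which ensures that the integer $K_{(A,B)}$ defined by \eqref{K-alman} exists and is finite.

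First, I would invoke Proposition~\ref{prop:finite} for the given $\rho>0$ to obtain a constant $\kappa=\kappa(\rho,A,B)>0$ such that every $\tilde\alpha\in L^\infty([0,T],[0,1])$ with $T\in(0,1]$ and $\int_0^T\tilde\alpha(t)\,dt\geq\rho T$ belongs to the class $\K(A,B,T,\kappa T^{2K_{(A,B)}+1})$. This identifies a natural candidate for the function appearing in Theorem~\ref{thm:c(T)}, namely
\begin{equation*}
c(T)=\kappa T^{2K_{(A,B)}+1}\qquad (T>0),
\end{equation*}
which is positive and continuous on $(0,\infty)$, with $T_0=1$.

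Second, I would verify that the hypotheses of Theorem~\ref{thm:c(T)} are now in place. The condition $\int_{a_n}^{b_n}\alpha(t)\,dt\geq \rho(b_n-a_n)$ is assumed directly, while the summability hypothesis becomes
\begin{equation*}
\sum_{n=1}^\infty c(b_n-a_n)=\kappa\sum_{n=1}^\infty (b_n-a_n)^{2K_{(A,B)}+1}=\infty,
\end{equation*}
which is precisely the assumption of the corollary. Applying Theorem~\ref{thm:c(T)} then gives $\|z(t)\|_{\RR^N}\to 0$ as $t\to\infty$ for every mild solution of \eqref{sys} corresponding to $\alpha$.

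I do not anticipate any real obstacle: the corollary is essentially a packaging statement that exposes the quantitative cost function $c(T)\sim T^{2K_{(A,B)}+1}$ of Seidman's type, computed in Proposition~\ref{prop:finite}, as a sharp input to the general abstract stability criterion of Theorem~\ref{thm:c(T)}. The only minor point to keep in mind is that Theorem~\ref{thm:c(T)} requires $c$ to be defined (positive and continuous) on all of $(0,\infty)$, but the monomial $\kappa T^{2K_{(A,B)}+1}$ trivially satisfies this without any further extension.
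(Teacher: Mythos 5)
Your proposal is correct and is exactly the argument the paper intends: the corollary is stated as an immediate consequence of Proposition~\ref{prop:finite} and Theorem~\ref{thm:c(T)}, obtained by taking $c(T)=\kappa T^{2K_{(A,B)}+1}$ and $T_0=1$, precisely as you do. The paper offers no further proof beyond this combination, so your write-up matches its approach (and usefully makes explicit the minor checks on positivity/continuity of $c$ and finiteness of $K_{(A,B)}$).
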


\hfill$\diamond$
\end{example}

{\bf Acknowledgments.}
 This work was supported by the ANR grant ArHyCo, Program ARPEGE, contract number
ANR-2008 SEGI 004 01-30011459. The research presented in this article was mostly carried out
while F.~M.~Hante and M.~Sigalotti were
with Institut
\'Elie Cartan (IECN)
 and CORIDA, INRIA Nancy--Grand Est.

\bibliographystyle{siam}
\bibliography{PE}

\end{document}